\newtheorem{thm}{Theorem}[section]
\newtheorem{cor}[thm]{Corollary}
\newtheorem{prop}[thm]{Proposition}
\newtheorem{lem}[thm]{Lemma}
\theoremstyle{remark}
\newtheorem{rem}[thm]{Remark}
\theoremstyle{definition}
\numberwithin{equation}{section}
\numberwithin{thm}{section}
\numberwithin{equation}{section}
\numberwithin{thm}{section}
\newcommand{\tr}{\mathrm{\,trace}\,}
\newcommand{\grad}{\mathrm{\,grad}}
\newcommand{\spa}{\mathrm{span\,}}
\title{Biconservative quasi-minimal immersions into semi-Euclidean spaces}
\date{}
\author{R. Ye\u gin \c Sen,  A. Kelleci, N. C. Turgay and E. \" Ozkara Canfes
}
\begin{document}

\maketitle

\begin{abstract} 
In this paper we study biconservative immersions into the semi-Riemannian space form $R^4_2(c)$ of dimension 4, index 2 and constant curvature, where $c\in\{0,-1,1\}$. First, we obtain a characterization of quasi-minimal proper biconservative immersions into  $R^4_2(c)$. Then we obtain the complete classification of   quasi-minimal  biconservative surfaces in $R^4_2(0)=\mathbb E^4_2$. We also obtain a new class of biharmonic quasi-minimal isometric immersion into $\mathbb E^4_2$.
\end{abstract}

\section{Introduction}

Surfaces of semi-Riemannian manifolds with zero mean curvature is one of mostly interested topics in differential geometry. When the ambient manifold $(N,\tilde g)$ is Riemannian, a surface with zero mean curvature, called minimal-surface, arises as the solution of the variational problem of finding the surface in $N$ with minimum area among all surfaces with the common boundary. On the other hand, if $N$ is a semi-Riemannian manifold with positive index, it admits an important class of surfaces whose mean curvature is zero. These surfaces are called \textit{quasi-minimal} surfaces and they have no counter part on Riemannian manifolds: By the definition, a submanifold $M$ of $(N,\tilde g)$ is said to be quasi-minimal if its mean curvature vector is light-like at every point. Quasi-minimal submanifolds play some fundamental roles in geometry as well as in physics and they are also called as `\textit{marginally trapped}' in the physic literature when the ambient manifold is a Lorentzian space-time, \cite{Penrose1965}.

Consider the bienergy integral 
\begin{equation}\label{BiEnergyInt}
E_2(\psi)=\frac{1}{2}\int_M\|\tau(\psi)\|^2 v_g
\end{equation}
for a mapping $\psi:(\Omega, g) \to (N, \tilde g) $ between two semi-Riemannian manifolds, where $v_g$ is the volume element of $g$ and $\tau( \psi)=-\tr \nabla d\psi$ is the tension of $\psi$. Let  $\tau_2(\psi)$ stand for the bitension field of $\psi $ defined by
$$
\tau_2(\psi)=-\Delta\tau(\psi)-\tr\left(\tilde R(d\psi, \tau(\psi))d\psi\right),
$$
where   $\Delta$ is the rough Laplacian defined on sections of $\psi^{-1}(T N)$, i.e., 
$$\Delta=-\tr \left(\nabla^\psi\nabla^\psi-\nabla^\psi_\nabla\right)$$
and $\tilde R$ is the curvature tensor of $(N, \tilde g)$. 

When \eqref{BiEnergyInt} is assumed to define a functional from $C^\infty(\Omega,N)$, it is named as bi-energy functional. In this case, the critical points of $E_2$ are called as biharmonic maps, \cite{ES}.  
In \cite{Ji2, Ji}, Jiang obtained the first and second variational formulas for $E_2$ and proved that $\psi$ is biharmonic if and only if the fourth order system of partial differential equations given by 
\begin{equation}\label{BihMapDef}
\tau_2(\psi)=0
\end{equation}
is satisfied. Biharmonic immersions particularly take interest of many geometers, \cite{ChenRapor,FNOBic3DSF,YuFu2014ThreeD}. 

On the other hand, if $\psi:M \to (N, \tilde g) $  is a given smooth mapping, one can also define a functional from the set of all  metrics on $M$ by using \eqref{BiEnergyInt}, \cite{FNOBic3DSF}. $(\Omega,g)$ is said to be a biconservative submanifold if $g$ is a critical point of this functional and $\psi:(\Omega,g) \hookrightarrow (N, \tilde g) $ is an isometric immersion. Note that critical points of this functional is characterized by the equation  
\begin{equation}\label{BicMapDef}
\langle \tau_2(\psi),d\psi\rangle=0,
\end{equation}
\cite{FNOBic3DSF} (See also \cite{Hilb1924}).

It is obvious that any biharmonic immersion is also biconservative. Because of this reason, biconservative submanifolds have been studied in many papers so far to understand geometry of biharmonic immersions, \cite{YuFu2014Mink3,YuFu2014Lor3S,FNOBic3DSF,TurgayHHypers}. In \cite{TurgayHHypers}, the third named author studied biconservative hypersurfaces in Euclidean spaces with three distinct principal curvatures. Also, classification results on biconservative hypersurfaces in $3$-dimensional semi-Riemannian space forms have been appeared in some papers, \cite{YuFu2014Mink3,YuFu2014Lor3S}. Most recently, biconservative surfaces in $4$-dimensional Euclidean space have been studied in \cite{YeginTurgay} and \cite{MOR2016JGA}.

In \cite{ChenE42Flat,ChenIshi1998}, all flat biharmonic quasi-minimal surfaces in the 4-dimensional pseudo-Euclidean space $ \mathbb E^4_2$ with neutral metric were obtained. Furthermore, in \cite{ChenE42Flat} the complete classification of flat quasi-minimal surfaces is given. Moreover, Chen and Garay studied quasi-minimal surfaces with parallel mean curvature vector in the pseudo-Euclidean space $ \mathbb E^4_2$  in  \cite{ChenE42PMCV}.  In this paper, we study quasi-minimal biconservative immersions into $\mathbb E^4_2$ and complete the study of biconservative quasi-minimal surfaces initiated in \cite{ChenE42Flat,ChenE42PMCV,ChenIshi1998}. In Sect. 2, we give basic definitions and equations on isometric immersions into semi-Riemannian space forms after we describe the notation used in the paper. In Sect. 3, we obtained a characterization of biconservative immersions into space forms of index 2. Finally in Sect. 4, we obtain our main result which is the complete local classification of biconservative surfaces of $\mathbb E^4_2$.

\section{Preliminaries}

We are going to denote the $n$-dimensional semi-Riemannian space form of index $s$ and constant curvature $c\in\{-1,0,1\}$ by  $R^n_s(c)$, i.e., 
$$
R^n_s(c)=\left\{
\begin{array}{cl}
\mathbb S^n_s &\mbox{if $c =1$,}\\
\mathbb E^n_s &\mbox{if $c=0$,}\\
\mathbb H^n_s &\mbox{if $c=-1$}
\end{array}
\right.
$$
and $\langle \cdot\,, \cdot\rangle$ stand for its metric tensor. When $c=0$, we define the light-cone of $\mathbb E^n_s$ by
$$\mathcal LC=\{p\in\mathbb E^n_s| \langle p,p\rangle=0 \}.$$

On the other hand, a non-zero vector $w$ in a finite dimensional non-degenerated inner product space $W$  is said to be space-like, light-like or time-like  if $\langle w,w\rangle>0$, $\langle w,w\rangle=0$ or $\langle w,w\rangle<0$, respectively. We are going to use the following well-known lemma later (see, for example, \cite[Lemma 22, p. 49]{ONeillKitap})
\begin{lem}\label{SubspaceInnProdWellKnowLemma1}\cite{ONeillKitap}
Let $V$ be a subspace of $W$ and $V^\perp$ its orthogonal complement. Then,
$\mathrm{dim\,} V +\mathrm{dim\,} V^\perp=\mathrm{dim\,} W$.
\end{lem}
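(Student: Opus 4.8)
The plan is to reduce the statement to the rank--nullity theorem, the essential point being that the inner product $\langle\cdot\,,\cdot\rangle$ is non-degenerate on the whole of $W$. Note that one cannot simply invoke an orthogonal direct sum decomposition $W=V\oplus V^\perp$: in an indefinite inner product space the subspace $V$ may itself be degenerate, so that $V\cap V^\perp\neq\{0\}$ and no such decomposition exists. It is precisely this subtlety that makes the lemma worth stating, and it is the reason a slightly indirect argument is needed.

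First I would consider the linear map $\Phi\colon W\to V^*$ sending a vector $w\in W$ to the linear functional $v\mapsto\langle w,v\rangle$ on $V$. By the very definition of the orthogonal complement, $\ker\Phi=V^\perp$. Next I would show that $\Phi$ is surjective. Given $f\in V^*$, extend it to some $\tf\in W^*$ (possible since $V$ is a subspace of the finite-dimensional space $W$); since $\langle\cdot\,,\cdot\rangle$ is non-degenerate on $W$, the musical map $W\to W^*$, $w\mapsto\langle w,\cdot\rangle$, is an injective linear map between vector spaces of equal finite dimension, hence an isomorphism. Thus there is $w\in W$ with $\langle w,\cdot\rangle=\tf$ on $W$, and restricting this identity to $V$ yields $\Phi(w)=f$.

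Finally, applying the rank--nullity theorem to $\Phi$ and using $\dim V^*=\dim V$ for the finite-dimensional space $V$, one obtains
\[
\dim W=\dim(\ker\Phi)+\dim(\operatorname{im}\Phi)=\dim V^\perp+\dim V^*=\dim V^\perp+\dim V,
\]
which is the assertion. I do not anticipate a genuine obstacle beyond the conceptual warning above. As an alternative route, one may fix a basis $e_1,\dots,e_k$ of $V$, extend it to a basis $e_1,\dots,e_n$ of $W$, and observe that $x=\sum_j x_je_j$ lies in $V^\perp$ if and only if $\sum_j\langle e_i,e_j\rangle x_j=0$ for $i=1,\dots,k$; the coefficient matrix here consists of the first $k$ rows of the Gram matrix $(\langle e_i,e_j\rangle)_{i,j}$, which is invertible by non-degeneracy of the form on $W$, so these rows are linearly independent and $\dim V^\perp=n-k=\dim W-\dim V$.
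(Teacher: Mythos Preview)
Your argument is correct; the paper itself does not supply a proof of this lemma but simply cites O'Neill's textbook, so there is no in-paper proof to compare against. Your rank--nullity approach via the restriction map $W\to V^*$ is in fact the standard textbook argument (and is essentially the one given in O'Neill), and your alternative Gram-matrix computation is an equally valid route.
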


Consider an isometric immersion  $f: (\Omega,g)\hookrightarrow  R^n_s(c)$  from  an $m$-dimensional semi-Riemannian manifold $(\Omega,g)$ with the Levi-Civita connection $\nabla$. Let $T\Omega$ and  $N^f\Omega$ stand for the tangent bundle of $\Omega$ and the normal bundle of $f$, respectively. If  $\widetilde{\nabla}$ denote the Levi-Civita connection of   $R^n_s(c)$, then the Gauss and Weingarten formulas are given, respectively, by
\begin{eqnarray}
\label{MEtomGauss} \widetilde\nabla_X Y&=& \nabla_X Y + \alpha_f(X,Y),\\
\label{MEtomWeingarten} \widetilde\nabla_X \xi&=& -A^f_\xi(X)+\nabla^\perp_X \xi,
\end{eqnarray}
 for any vector fields $X,\ Y\in T\Omega$ and $\xi\in N^f\Omega$, where $\alpha_f$ and  $\nabla^\perp$  are the second fundamental form and the normal connection of $f$, respectively,   and $A^f_\xi$ stands for the shape operator of $f$ along the  normal direction $\xi$. $A^f$ and $\alpha_f$  are related by
\begin{eqnarray}
\label{MinkAhhRelatedby} \langle A^f_\xi X,Y\rangle&=&\langle \alpha_f(X,Y),\xi\rangle.
\end{eqnarray}

On the other hand, the second fundamental form $\alpha_f$ of $f$,  the curvature tensor $R$ of $(\Omega,g)$    and the normal curvature tensor $R^\perp$ of $f$ satisfies the integrability conditions
\begin{subequations}
\begin{eqnarray}
\label{MinkGaussEquation} R(X,Y)Z&=&c(X\wedge Y)Z+A^f_{ \alpha_f(Y,Z)}X-A^f_{\alpha_f(X,Z)}Y,\\
\label{MinkCodazzi} (\bar \nabla_X \alpha_f )(Y,Z)&=&(\bar \nabla_Y \alpha_f )(X,Z),\\
\label{MinkRicciEquation} R^{\perp}(X,Y)\xi&=&\alpha_f(X,A^f_\xi Y)-\alpha_f(A^f_\xi X,Y),
\end{eqnarray}
\end{subequations}
called Gauss, Codazzi and Ricci equations, respectively, where, by the definition, we have
\begin{eqnarray*}
(X\wedge Y)Z&=&\langle Y,Z\rangle X-\langle X,Z\rangle Y,\\
(\bar \nabla_X \alpha_f)(Y,Z)&=&\nabla^\perp_X \alpha_f(Y,Z)-\alpha_f(\nabla_X Y,Z)-\alpha_f(Y,\nabla_X Z).
\end{eqnarray*}

The mean curvature vector field of the isometric immersion $f$  is defined by
\begin{equation}
\label{MeanCurvVectFirstDef} H^f=\frac 1m\tr\alpha_f.
\end{equation}
$f$ is said to be  quasi-minimal  if $H^f$ is light-like at every point of $\Omega$, i.e, $\langle H^f,H^f\rangle=0$ and $H^f\neq0$. In this case, $M=f(\Omega)$ is called a quasi-minimal submanifold (quasi-minimal surface if $m=2$) of $R^n_s(c)$.

Further, we are going to denote the kernel of the shape operator along $H^f$ by $T^f$, i.e., 
$$T^f=\{X\in TM| A^f_{H^f}(X)=0\}.$$


\subsection{Lorentzian surfaces in $R^4_2(c)$}\label{SectLorentzQuasi}
Let  $(\Omega,g)$ be a 2-dimensional semi-Riemannian manifold. Consider an isometric immersion $f:(\Omega,g) \hookrightarrow R^4_2(c)$ and let the surface $M$ be the image of $f$, i.e., $M=f(\Omega)$. Then, the Gaussian curvature $K$  of $\Omega$ is defined by 
\begin{eqnarray}
\label{GaussianCurvature}K&=& \frac{R(X,Y,Y,X)}{\langle f_*X,f_*X\rangle\langle f_*Y,f_*Y\rangle-\langle f_*X,f_*Y\rangle^2},
\end{eqnarray}
where $X$ and $Y$ span the tangent bundle of $\Omega$. $\Omega$, and thus $M$, is said to be flat if $K$ vanishes identically.

If $g$ has index 1, then $M$ is said to be a Lorentzian surface. In this case, for any $m\in \Omega$  there exists a local coordinates system $(\mathcal N_m,(u_1,u_2))$, called isothermal coordinate system of $\Omega$, such that $m\in \mathcal N_m$ and 
$$\left.g\right|_{\mathcal N_m}=\tilde m^2(u,v)(du_1 \otimes du_1 -du_2 \otimes du_2)$$
for a positive function $\tilde m\in C^\infty(\Omega)$.  By defining a new local coordinate system $(u,v)$ by $u=\frac{u_1+u_2}{\sqrt 2}$ and $v=\frac{u_1-u_2}{\sqrt 2}$, we obtain (\cite{Chen2})
$$\left.g\right|_{\mathcal N_m}=-\tilde m^2(u,v)(du \otimes dv +dv \otimes du).$$
It is well-known that a light-like vector $w$ tangent to $M$ is propositonal to either $f_u=df(\partial_u)$ or $f_v=df(\partial_v)$.

Note that the light-like curves $u=\mbox{const}$ and $v=\mbox{const}$ are pre-geodesics of $M$. In other words, there exists a re-parametrization of the curve $u=c_1$ (or $v=c_2$) which is a geodesic of $M$. Therefore, by defining a new local coordinate system $(s,t)$  on $M$ by
$$s=s(u,v)=\int_{u_0}^u\tilde m^2(\xi,v)d\xi,\quad t=v$$
and letting $\displaystyle m(u,v)=\frac{\partial}{\partial v}\left(\int_{u_0}^u\tilde m^2(\xi,v)d\xi\right)$, 
we obtain a semi-geodesic coordinate system on $M$(see, for example, \cite{UDursun_NCT_Taiwan}).
\begin{prop}\label{SemiGeodesicCoord}
Let  $M$ be a Lorentzian surface with the metric tensor $g$. Then,  there exists a local coordinate system $(s,t)$ such that
\begin{equation}\label{DefMetricgm}
g=g_m:=-(ds \otimes dt +dt \otimes ds)+2m dt \otimes dt.
\end{equation}
Furthermore, the Levi-Civita connection of $M$ satisfies
\begin{eqnarray*}
\nabla_{\partial_s}\partial_s&=&0,\\
\nabla_{\partial_s}\partial_t= \nabla_{\partial_t}\partial_s&=& - m_s\partial_s,\\
\nabla_{\partial_t}\partial_t&=&  m_s\partial_t+(2 m m_s- m_t)\partial_s
\end{eqnarray*}
and the Gaussian curvature of $M$ is
\begin{eqnarray}\label{LorentGaussCurva}
K= m_{ss}.
\end{eqnarray}
\end{prop}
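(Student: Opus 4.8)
The plan is to start from the known $(u,v)$-isothermal coordinates and push through the two change-of-variables steps that the paragraph before the proposition already outlines, keeping careful track of the connection coefficients at each stage. First I would record the metric in the null coordinates $(u,v)$, namely $g=-\tilde m^2(du\otimes dv+dv\otimes du)$, and compute its Christoffel symbols: the only nonvanishing ones are $\nabla_{\partial_u}\partial_u=\frac{2\tilde m_u}{\tilde m}\partial_u$ and $\nabla_{\partial_v}\partial_v=\frac{2\tilde m_v}{\tilde m}\partial_v$ (the mixed covariant derivative vanishes because $g(\partial_u,\partial_u)=g(\partial_v,\partial_v)=0$ and $g(\partial_u,\partial_v)$ is, up to the conformal factor, constant). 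This already shows the $v=\text{const}$ and $u=\text{const}$ curves are pre-geodesics, which is the geometric fact motivating the next substitution.

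Next I would introduce $s=s(u,v)=\int_{u_0}^u\tilde m^2(\xi,v)\,d\xi$ and $t=v$, with inverse relations $\partial_u=s_u\,\partial_s=\tilde m^2\partial_s$ and $\partial_v=s_v\,\partial_s+\partial_t=(\int_{u_0}^u\tilde m^2_v\,d\xi)\partial_s+\partial_t$. Setting $m:=s_v=\partial_v\big(\int_{u_0}^u\tilde m^2\,d\xi\big)$ as in the statement, a direct substitution into $g=-\tilde m^2(du\otimes dv+dv\otimes du)$ gives $g=-(ds\otimes dt+dt\otimes ds)+2m\,dt\otimes dt$, which is \eqref{DefMetricgm}. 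For the connection I would transform the covariant derivatives computed above, using $\nabla_{\partial_s}\partial_s=\tilde m^{-2}\cdot\tilde m^{-2}\nabla_{\partial_u}\partial_u$ plus the derivative-of-coefficient terms; the $\partial_u$-equation collapses to $\nabla_{\partial_s}\partial_s=0$, confirming that $s$ is an affine parameter along the null geodesics, and the remaining two identities follow by expanding $\nabla_{\partial_s}\partial_t$ and $\nabla_{\partial_t}\partial_t$ in the $(s,t)$ frame and matching coefficients against the new Christoffel symbols of $g_m$ — which can alternatively be read off directly from \eqref{DefMetricgm} by the standard formula, since the metric has the explicit form $g_{st}=g_{ts}=-1$, $g_{tt}=2m$, $g_{ss}=0$. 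Finally, the Gaussian curvature is obtained from \eqref{GaussianCurvature} applied to the frame $X=\partial_s$, $Y=\partial_t$: the denominator is $g_{ss}g_{tt}-g_{st}^2=-1$, and $R(\partial_s,\partial_t,\partial_t,\partial_s)$ is computed from the connection coefficients just found, yielding $K=m_{ss}$ after cancellation.

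The main obstacle I expect is purely bookkeeping: correctly transforming the connection under the non-orthogonal change of variables $(u,v)\mapsto(s,t)$, since $\partial_v$ is a combination of $\partial_s$ and $\partial_t$ with a non-constant coefficient $m$, so the chain rule produces several terms that must cancel in just the right way to leave the clean expressions in the statement. A cleaner route that sidesteps most of this is to \emph{verify} \eqref{DefMetricgm} first (an easy one-line substitution), then discard the $(u,v)$ picture entirely and compute the Christoffel symbols of $g_m$ ab initio from $g_{tt}=2m$, $g_{st}=-1$, $g_{ss}=0$ using $\Gamma^k_{ij}=\tfrac12 g^{kl}(\partial_i g_{jl}+\partial_j g_{il}-\partial_l g_{ij})$, where the inverse metric is $g^{st}=g^{ts}=-1$, $g^{ss}=-2m$, $g^{tt}=0$; this gives the three displayed connection formulas directly, and then $K=R(\partial_s,\partial_t,\partial_t,\partial_s)/(-1)$ with $R(\partial_s,\partial_t)\partial_t=\nabla_{\partial_s}\nabla_{\partial_t}\partial_t-\nabla_{\partial_t}\nabla_{\partial_s}\partial_t$ (the bracket term vanishes) yields $K=m_{ss}$ after a short computation. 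I would present this second route in the write-up, as it avoids the awkward intermediate coordinate change while still justifying, via the pre-geodesic remark, why such coordinates exist.
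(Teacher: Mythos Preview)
Your proposal is correct and follows the same route the paper itself sketches in the paragraph immediately preceding the proposition: start from the null isothermal coordinates $(u,v)$, perform the substitution $s=\int_{u_0}^u\tilde m^2(\xi,v)\,d\xi$, $t=v$ (with $m=s_v$) to obtain \eqref{DefMetricgm}, and then read off the connection and curvature. The paper does not supply a proof beyond that construction and a citation, so your two suggested routes---transforming the connection versus computing the Christoffel symbols of $g_m$ ab initio from $g_{ss}=0$, $g_{st}=-1$, $g_{tt}=2m$---both simply fill in details the paper leaves implicit; your preference for the second route is sensible and matches what a direct verification would look like.
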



\subsection{Biharmonic immersions}\label{SectBihIm}
First, we would like to recall a necessary and sufficient condition for an isometric immersion to be biharmonic. In this case by splitting $\tau_2(f)$ into its normal and tangential part and employing \eqref{BihMapDef}, one can obtain the following well-known result.
\begin{prop}\label{prop:splitting}
An isometric immersion $f:(\Omega,g)\hookrightarrow  (N,\tilde g)$  is biharmonic if and only if the equations 
\begin{equation}\label{BiconservativeEquationMostGeneral}
m\grad \left(\tilde g(H^f,H^f)\right) + 4\tr A^f_{\nabla^\perp_{\cdot}H^f}(\cdot) + 
4\tr\big(\tilde R(\cdot,H^f)\cdot\big)^T=0
\end{equation}
and
\begin{equation}\label{BiharmonicEquationMostGeneral}
\tr\alpha_f(A^f_{H^f}(\cdot),\cdot) - \Delta^\perp H^f + 
2\tr\big(\tilde R(\cdot,H^f)\cdot\big)^\perp=0
\end{equation}
are satisfied, where $m$ is the dimension of $\Omega$, $\Delta^\perp $  denote the Laplace operator associated with the normal connection of $f$. 
\end{prop}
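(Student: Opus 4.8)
The plan is to reduce everything to the definition of the bitension field together with a double application of the Gauss and Weingarten formulas. For an isometric immersion one has $\tau(f)=mH^f$ (up to the sign convention adopted for $\tau$), since $\tr\nabla df=\tr\alpha_f=mH^f$ by \eqref{MeanCurvVectFirstDef}. Substituting this into $\tau_2(f)=-\Delta\tau(f)-\tr\big(\tilde R(df,\tau(f))df\big)$ and using linearity, the statement amounts to three things: (i) computing the rough Laplacian $\Delta H^f$ of the normal section $H^f$ and separating it into its $T\Omega$-part and its $N^f\Omega$-part; (ii) splitting $\tr\big(\tilde R(df(\cdot),H^f)df(\cdot)\big)$ into $\big(\tr\tilde R(\cdot,H^f)\cdot\big)^T$ and $\big(\tr\tilde R(\cdot,H^f)\cdot\big)^\perp$, which is immediate; and (iii) noting that $\tau_2(f)=0$ holds if and only if both the tangential and the normal components of $\tau_2(f)$ vanish, since a vector field along $f$ is zero precisely when each of its two components is. The two component equations then become \eqref{BiconservativeEquationMostGeneral} and \eqref{BiharmonicEquationMostGeneral} after multiplying each by the scalar factor produced in the computation.

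For step (i) I would fix $p\in\Omega$ and a local orthonormal frame $\{e_1,\dots,e_m\}$ of $T\Omega$ with $\langle e_i,e_j\rangle=\varepsilon_i\delta_{ij}$, $\varepsilon_i=\pm1$, geodesic at $p$ (so $(\nabla_{e_i}e_j)_p=0$), whence $\Delta V|_p=-\sum_i\varepsilon_i\nabla^f_{e_i}\nabla^f_{e_i}V$ for $V\in\Gamma(f^{-1}TN)$. A first differentiation of $H^f$ via \eqref{MEtomWeingarten} gives $\nabla^f_{e_i}H^f=-A^f_{H^f}e_i+\nabla^\perp_{e_i}H^f$; a second differentiation, applying \eqref{MEtomGauss} to the tangential summand $-A^f_{H^f}e_i$ and \eqref{MEtomWeingarten} to the normal summand $\nabla^\perp_{e_i}H^f$, yields an expression whose normal part is assembled from $\sum_i\varepsilon_i\alpha_f(e_i,A^f_{H^f}e_i)=\tr\alpha_f(A^f_{H^f}\cdot,\cdot)$ and $\sum_i\varepsilon_i\nabla^\perp_{e_i}\nabla^\perp_{e_i}H^f$ (which is $\Delta^\perp H^f$ up to the sign convention for $\Delta^\perp$), and whose tangential part is assembled from $\sum_i\varepsilon_iA^f_{\nabla^\perp_{e_i}H^f}e_i=\tr A^f_{\nabla^\perp_\cdot H^f}(\cdot)$ and $\sum_i\varepsilon_i\nabla_{e_i}(A^f_{H^f}e_i)$. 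This last sum is the only one needing care: pairing it with a tangent vector $X$, writing $\langle A^f_{H^f}e_i,X\rangle=\langle\alpha_f(e_i,X),H^f\rangle$, differentiating, and using the Codazzi equation \eqref{MinkCodazzi} to replace $(\bar\nabla_{e_i}\alpha_f)(e_i,X)$ by $(\bar\nabla_X\alpha_f)(e_i,e_i)$, one obtains $\sum_i\varepsilon_i\langle(\bar\nabla_X\alpha_f)(e_i,e_i),H^f\rangle=\langle\nabla^\perp_X(mH^f),H^f\rangle=\tfrac m2\,\langle\grad\langle H^f,H^f\rangle,X\rangle$, together with a second copy of $\tr A^f_{\nabla^\perp_\cdot H^f}(\cdot)$. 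Collecting, the tangential part of $\Delta H^f$ is $\tfrac m2\grad\langle H^f,H^f\rangle+2\tr A^f_{\nabla^\perp_\cdot H^f}(\cdot)$ and the normal part is $\tr\alpha_f(A^f_{H^f}\cdot,\cdot)+\Delta^\perp H^f$; adding the contributions of the curvature term from step (ii) and multiplying $\tau_2(f)=0$ through by the relevant constant in each component (and reconciling the sign conventions for $\Delta^\perp$ and $\tilde R$) produces exactly \eqref{BiconservativeEquationMostGeneral} and \eqref{BiharmonicEquationMostGeneral}.

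The main obstacle is organizational rather than conceptual: carrying the second covariant differentiation correctly, repeatedly invoking \eqref{MEtomGauss}--\eqref{MEtomWeingarten}, keeping track of the causal signs $\varepsilon_i$ in the index-$2$ setting, and in particular extracting the $\grad\langle H^f,H^f\rangle$ term cleanly from $\sum_i\varepsilon_i\nabla_{e_i}(A^f_{H^f}e_i)$ via Codazzi. Once the tangential and normal parts of $\tau_2(f)$ are in hand, the `if and only if' is automatic from the orthogonal decomposition $f^{-1}TN=f_*T\Omega\oplus N^f\Omega$. Since this splitting is classical — it descends from Jiang's first and second variational formulas for $E_2$ — I would also simply cite that computation and merely point out that the only modification in the semi-Riemannian case is the insertion of the signs $\varepsilon_i=\langle e_i,e_i\rangle$ in the traces, which leaves the argument unchanged.
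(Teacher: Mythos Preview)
Your approach is exactly the one the paper has in mind: the paper does not give a proof at all but merely says that ``by splitting $\tau_2(f)$ into its normal and tangential part and employing \eqref{BihMapDef}, one can obtain the following well-known result.'' Your outline is the standard derivation behind that sentence.

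There is, however, one genuine gap. In extracting the term $\tfrac m2\grad\langle H^f,H^f\rangle$ from $\sum_i\varepsilon_i\nabla_{e_i}(A^f_{H^f}e_i)$ you invoke the Codazzi equation \eqref{MinkCodazzi}. But \eqref{MinkCodazzi} is the Codazzi equation for an immersion into a \emph{space form}; Proposition~\ref{prop:splitting} is stated for an arbitrary ambient $(N,\tilde g)$, where Codazzi reads
\[
(\bar\nabla_{e_i}\alpha_f)(e_i,X)-(\bar\nabla_X\alpha_f)(e_i,e_i)=\big(\tilde R(e_i,X)e_i\big)^\perp .
\]
Pairing with $H^f$ and using the symmetry $\langle\tilde R(e_i,X)e_i,H^f\rangle=\langle\tilde R(e_i,H^f)e_i,X\rangle$, this correction contributes an \emph{additional} copy of $\big(\tr\tilde R(\cdot,H^f)\cdot\big)^T$ to the tangential part of $\Delta H^f$. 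That extra copy is precisely what, after combining with the explicit curvature term in $\tau_2(f)$ and clearing the factor $m/2$, produces the coefficient $4$ in \eqref{BiconservativeEquationMostGeneral}. If you use the space-form Codazzi as written, you will arrive at a coefficient $2$ there instead of $4$. So either replace \eqref{MinkCodazzi} by the general Codazzi equation in this step, or restrict the statement to space forms; the rest of your computation is fine.
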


On the other hand, if $\psi=f$ is an isometric immersion, then \eqref{BicMapDef} is equivalent to $\left(\tau_2(f)\right)^T=0$. Therefore, by using Proposition \ref{prop:splitting} we have
\begin{prop}\label{LemmaIFFbic}
An isometric immersion $f:(\Omega,g)\hookrightarrow  (N,\tilde g)$ between semi-Riemannian manifolds is biconservative if and only if  the equation \eqref{BiconservativeEquationMostGeneral} is satisfied.
\end{prop}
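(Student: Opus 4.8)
The plan is to deduce the statement directly from the tangential–normal splitting already recorded in Proposition \ref{prop:splitting}. Recall that for an isometric immersion $\psi=f$ one has $\tau(f)=mH^f$, and the biconservativity condition \eqref{BicMapDef}, namely $\langle\tau_2(f),df\rangle=0$, asserts that $\tau_2(f)$ is orthogonal to every vector of the form $df(X)$ with $X\in T\Omega$. Since at each point $p$ these vectors span $f_*(T_p\Omega)$, this is equivalent to the vanishing of the tangential component of the bitension field, that is, to $\left(\tau_2(f)\right)^T=0$.

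Next I would invoke the decomposition of the bitension field that underlies Proposition \ref{prop:splitting}. Writing $\tau_2(f)=-\Delta\tau(f)-\tr\big(\tilde R(df,\tau(f))df\big)$ and splitting $\Delta(mH^f)$ together with the curvature term into their tangential and normal parts via the Gauss and Weingarten formulas \eqref{MEtomGauss}–\eqref{MEtomWeingarten}, one finds that, up to a non-zero constant factor, $\left(\tau_2(f)\right)^T$ equals
\[
m\,\grad\!\left(\tilde g(H^f,H^f)\right) + 4\tr A^f_{\nabla^\perp_{\cdot}H^f}(\cdot) + 4\tr\big(\tilde R(\cdot,H^f)\cdot\big)^T ,
\]
which is exactly the left–hand side of \eqref{BiconservativeEquationMostGeneral}. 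This is precisely the Jiang computation that produces the tangential biharmonic equation in Proposition \ref{prop:splitting}: by that proposition $f$ is biharmonic iff \eqref{BiconservativeEquationMostGeneral} and \eqref{BiharmonicEquationMostGeneral} both hold, the first being $\left(\tau_2(f)\right)^T=0$ and the second $\left(\tau_2(f)\right)^\perp=0$.

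Combining the two observations finishes the argument: $f$ is biconservative if and only if $\left(\tau_2(f)\right)^T=0$, if and only if \eqref{BiconservativeEquationMostGeneral} is satisfied. The only genuinely computational ingredient is the tangential–normal splitting of $\Delta(mH^f)$ — i.e. tracking how $\nabla^\psi\nabla^\psi$ acts on the section $mH^f$ of $f^{-1}(TN)$ and collecting the terms tangent to $M$ — but this is the standard calculation already embedded in Proposition \ref{prop:splitting}, so no new work is needed here. I therefore expect no serious obstacle: the proposition is essentially a bookkeeping corollary of the first and second variation formulas for the bienergy, together with the elementary fact that $\{df(X): X\in T\Omega\}$ spans the tangent space of $M$ at each point.
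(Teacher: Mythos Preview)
Your proposal is correct and follows exactly the paper's approach: observe that \eqref{BicMapDef} for an isometric immersion is equivalent to $\left(\tau_2(f)\right)^T=0$, and then read off from Proposition~\ref{prop:splitting} that this tangential component is (up to a nonzero constant) the left-hand side of \eqref{BiconservativeEquationMostGeneral}. The paper's own argument is precisely this one-line deduction, so nothing further is needed.
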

We immediately have the following result of Propositon \ref{LemmaIFFbic} for the case $(N,\tilde g)= R^n_s(c)$.
\begin{cor}\label{TrivialCorollar}
An isometric immersion $f:(\Omega,g)\hookrightarrow  R^n_s(c)$ is biconservative if its mean curvature vector is parallel on the normal bundle.
\end{cor}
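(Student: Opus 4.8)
The plan is to verify that the single equation \eqref{BiconservativeEquationMostGeneral} characterizing biconservativity (Proposition \ref{LemmaIFFbic}) holds automatically once the mean curvature vector is parallel in the normal bundle, i.e.\ $\nabla^\perp H^f=0$, by showing that each of the three summands on the left-hand side of \eqref{BiconservativeEquationMostGeneral} vanishes on its own.

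First I would handle the term $m\grad\big(\langle H^f,H^f\rangle\big)$. Since the normal connection $\nabla^\perp$ is compatible with the metric induced on $N^f\Omega$, for every $X\in T\Omega$ one has $X\big(\langle H^f,H^f\rangle\big)=2\langle\nabla^\perp_X H^f,H^f\rangle=0$; hence $\langle H^f,H^f\rangle$ is locally constant and its gradient vanishes. The middle term $4\tr A^f_{\nabla^\perp_{\cdot}H^f}(\cdot)$ is even more immediate: the normal field appearing in the subscript is $\nabla^\perp_X H^f\equiv 0$, and $A^f_0=0$, so the whole trace is zero.

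For the curvature term I would invoke that the ambient space $R^n_s(c)$ has constant curvature $c$, so $\tilde R(X,Y)Z=c\big(\langle Y,Z\rangle X-\langle X,Z\rangle Y\big)$. Taking $Y=H^f$ and using that $H^f$ is normal, hence $\langle H^f,X\rangle=0$ for every tangent $X$, gives $\tilde R(X,H^f)X=-c\langle X,X\rangle H^f$, which is a normal vector; therefore $\big(\tilde R(\cdot,H^f)\cdot\big)^T=0$ and the last term drops out as well. With all three summands equal to zero, \eqref{BiconservativeEquationMostGeneral} is satisfied, and Proposition \ref{LemmaIFFbic} yields that $f$ is biconservative.

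There is no serious obstacle here: this is a corollary in the literal sense. The only two points that deserve an explicit line are recalling that $\nabla^\perp$ is a metric connection (needed to kill the gradient term) and writing the space-form curvature tensor explicitly together with the orthogonality $\langle H^f,X\rangle=0$ (needed to kill the curvature term); everything else is a direct substitution into \eqref{BiconservativeEquationMostGeneral}.
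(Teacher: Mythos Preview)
Your proof is correct and follows exactly the approach the paper intends: the corollary is stated as an immediate consequence of Proposition~\ref{LemmaIFFbic} for $(N,\tilde g)=R^n_s(c)$, and you have simply spelled out why each of the three terms in \eqref{BiconservativeEquationMostGeneral} vanishes under the hypotheses. There is nothing to add or change.
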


\begin{rem}\label{REME42_MCV}
Because of Corollary \ref{TrivialCorollar}, we are going to call a biconservative isometric immersion $f$ from  $(\Omega,g)$ into $R^n_s(c)$ as \textit{proper} if $\nabla^\perp H^f\neq0$ at any point of $\Omega$. Moreover, we would like to refer to \cite{ChenE42PMCV} for classification of quasi-minimal surfaces with parallel mean curvature vector in $\mathbb E^4_2$ (See also \cite{FuHouPMCV}). 
\end{rem}

\section{Biconservative Immersions into Space Forms of Index 2}
In this section, we consider quasi-minimal biconservative immersions into $R^4_2(c)$ for $c\in\{-1,0,1\}$. Consider a 2-dimensional semi-Riemannian manifold $(\Omega,g)$, where $g$ is a Lorentzian metric. Let  $f: (\Omega,g)\hookrightarrow R^4_2(c)$ be a quasi-minimal isometric immersion and put $M=f(\Omega)$. 

We choose two vector fields $e_1,e_2$ tangent to $M$ such that $\langle e_i,e_j\rangle=1-\delta_{ij},\ i,j=1,2$. Then, there exist smooth functions $\phi_1,\ \phi_2$ such that
\begin{subequations}\label{LorentR42cLeviCitia1ALL}
\begin{eqnarray}
\label{LorentR42cLeviCitia1a} \nabla_{e_i}e_1&=&\phi_i e_1,\\
\label{LorentR42cLeviCitia1b} \nabla_{e_i}e_2&=&-\phi_i e_2.
\end{eqnarray}
\end{subequations} 
Put  $e_3=-H^f\in N^f\Omega$ and let  $e_4\in N^f\Omega$  be the unique light-like vector field satisfying $\langle e_3,e_4\rangle=-1$. 
On the other hand, if we define smooth functions $h^\alpha_{ij}$ by
$$h^\alpha_{ij}=\langle \alpha_f(e_i,e_j),e_\alpha\rangle, \quad i,j,=1,2,\ \alpha=3,4, $$
then we get
\begin{subequations}\label{QuasiR42cSecFundForm1all}
\begin{eqnarray}
\label{QuasiR42cSecFundForm1a} \alpha_f(e_i,e_i)&=&-h^4_{ii}e_3-h^3_{ii}e_4,\\
\label{QuasiR42cSecFundForm1b} \alpha_f(e_1,e_2)&=&e_3,
\end{eqnarray}
\end{subequations}
where \eqref{QuasiR42cSecFundForm1b} follows from $H^f=-\alpha_f(e_1,e_2)$. Note that we also have $h^\alpha_{ij}=\langle A^f_{e_\alpha}e_i,e_j\rangle$  because of  \eqref{MinkAhhRelatedby}. Therefore, the shape operators of $f$ satisfies  
\begin{subequations}\label{QuasiR42cShapeOp1all}
\begin{eqnarray}
\label{QuasiR42cShapeOp1a} A^f_{e_3} e_1=-h^3_{11}e_2, &\quad&  A^f_{e_3} e_2=-h^3_{22}e_1,\\
\label{QuasiR42cShapeOp1b} A^f_{e_4} e_1=e_1-h^4_{11}e_2, &\quad&  A^f_{e_4} e_2=-h^4_{22}e_1+e_2.
\end{eqnarray}
\end{subequations}
On the other hand, the Laplace operator $\Delta^\perp $  associated with the normal connection of $f$ takes the form 
$$\Delta^\perp=\nabla^\perp_{e_1}\nabla^\perp_{e_2}- \nabla^\perp_{\nabla_{e_1}e_2}+\nabla^\perp_{e_2}\nabla^\perp_{e_1}- \nabla^\perp_{\nabla_{e_2}e_1}.$$

Furthermore, one can define smooth functions $\xi_1,\xi_2$ by 
\begin{equation}
\label{QuasiR42xiidef} \nabla^\perp_{e_i} e_3=\xi_ie_3 \quad\mbox{and}\quad  \nabla^\perp_{e_i} e_4=-\xi_ie_4.
\end{equation}

We obtain the following characterization of proper biconservative immersions.
\begin{prop}\label{PropRnsc1}
Let $(\Omega,g)$ be a 2-dimensional semi-Riemannian manifold and $f: (\Omega,g)\hookrightarrow R^4_2(c)$ a quasi-minimal isometric immersion. Then, $f$ is  proper biconservative if and only if for any point $p$ such that  $A^f_{H^f}(p)\neq0$, there exists a neighborhood $\mathcal N_p$ such that $T^F$ is a degenerated distribution along which $H^F$ is parallel, where $F=\left.f\right|_{\mathcal N_p}$.
\end{prop}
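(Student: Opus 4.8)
The plan is to translate both conditions in the statement into pointwise relations among the frame functions $\phi_i,\xi_i,h^{\alpha}_{ij}$ attached to the null frame $e_1,e_2,e_3=-H^f,e_4$.

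\textbf{Step 1: reducing biconservativity.} By Proposition~\ref{LemmaIFFbic}, $f$ is biconservative iff \eqref{BiconservativeEquationMostGeneral} holds. Since $f$ is quasi-minimal, $\langle H^f,H^f\rangle\equiv0$, so the gradient term vanishes. Since $R^4_2(c)$ has constant curvature, $\tilde R(X,H^f)Z=c\bigl(\langle H^f,Z\rangle X-\langle X,Z\rangle H^f\bigr)$, which for tangent $X,Z$ equals $-c\langle X,Z\rangle H^f$ and is therefore normal; hence $\bigl(\tilde R(\cdot,H^f)\cdot\bigr)^T\equiv0$. Thus $f$ is biconservative iff $\tr A^f_{\nabla^\perp_{\cdot}H^f}(\cdot)=0$. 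By \eqref{QuasiR42xiidef} we have $\nabla^\perp_{e_i}H^f=\xi_iH^f$, whence $A^f_{\nabla^\perp_{e_i}H^f}=\xi_iA^f_{H^f}$, and, computing the trace in the null frame and using $A^f_{H^f}=-A^f_{e_3}$ together with \eqref{QuasiR42cShapeOp1a},
\[
\tr A^f_{\nabla^\perp_{\cdot}H^f}(\cdot)=\xi_1A^f_{H^f}(e_2)+\xi_2A^f_{H^f}(e_1)=\xi_1h^3_{22}\,e_1+\xi_2h^3_{11}\,e_2
\]
(up to a global sign fixed by the normalization of $\langle e_1,e_2\rangle$). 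Since $e_1,e_2$ are independent,
\begin{equation}\label{biceqframe}
f\ \text{is biconservative}\quad\Longleftrightarrow\quad \xi_1h^3_{22}=0\ \text{ and }\ \xi_2h^3_{11}=0\ \text{ on }\Omega .
\end{equation}

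\textbf{Step 2: the kernel distribution.} From \eqref{QuasiR42cShapeOp1a} and $H^f=-e_3$, the operator $A^f_{H^f}$ satisfies $A^f_{H^f}e_1=h^3_{11}e_2$ and $A^f_{H^f}e_2=h^3_{22}e_1$. Hence $A^f_{H^f}(p)\neq0$ iff $\bigl(h^3_{11}(p),h^3_{22}(p)\bigr)\neq(0,0)$, and at such $p$ the kernel $T^f_p$ equals the null line $\spa\{e_2(p)\}$ when $h^3_{22}(p)=0\neq h^3_{11}(p)$, the null line $\spa\{e_1(p)\}$ when $h^3_{11}(p)=0\neq h^3_{22}(p)$, and $\{0\}$ when $h^3_{11}(p)h^3_{22}(p)\neq0$. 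Since in a $2$-dimensional Lorentzian vector space the degenerate subspaces are exactly the null lines, $T^f_p$ is degenerate at a point with $A^f_{H^f}(p)\neq0$ precisely when exactly one of $h^3_{11}(p),h^3_{22}(p)$ vanishes.

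\textbf{Step 3: the forward implication.} Suppose $f$ is proper biconservative and $A^f_{H^f}(p)\neq0$; relabelling $e_1,e_2$ if needed we may assume $h^3_{11}(p)\neq0$. Then \eqref{biceqframe} gives $\xi_2(p)=0$; properness, i.e. $\nabla^\perp H^f(p)\neq0$, gives $(\xi_1(p),\xi_2(p))\neq(0,0)$ and hence $\xi_1(p)\neq0$; and \eqref{biceqframe} then yields $h^3_{22}(p)=0$. On a neighborhood $\mathcal N_p$ we still have $h^3_{11}\neq0$ and $\xi_1\neq0$ by continuity, so \eqref{biceqframe} forces $h^3_{22}\equiv0$ and then $\xi_2\equiv0$ on $\mathcal N_p$. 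With $F=f|_{\mathcal N_p}$ this says $T^F=\spa\{e_2\}$ is a rank-one null (hence degenerate) distribution along which $\nabla^\perp_{e_2}H^F=\xi_2H^F=0$, i.e. $H^F$ is parallel.

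\textbf{Step 4: the converse, and the main obstacle.} Conversely, near a point $p$ with $A^f_{H^f}(p)\neq0$ the hypothesis supplies a neighborhood on which $T^F$ is a degenerate distribution; by Step~2 and continuity, on a possibly smaller neighborhood $T^F$ must be a constant null line, say $\spa\{e_2\}$ after relabelling, which forces $h^3_{22}\equiv0$ there, while the parallelism of $H^F$ along $T^F$ forces $\xi_2\equiv0$ there; at the remaining points $A^f_{H^f}=0$ forces $h^3_{11}=h^3_{22}=0$, so \eqref{biceqframe} holds trivially. Hence \eqref{biceqframe} holds on all of $\Omega$ and $f$ is biconservative. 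The delicate point is the remaining claim that $\nabla^\perp H^f$ is nowhere zero: the hypothesis only pins down $f$ near the open set $\{A^f_{H^f}\neq0\}$, where it yields $\xi_2=0$ but not immediately $\xi_1\neq0$. Closing this requires feeding the normal form just obtained into the Codazzi equation \eqref{MinkCodazzi} (and invoking quasi-minimality once more) to prevent $\nabla^\perp H^f$ from degenerating, i.e. to show the configuration is genuinely proper rather than merely compatible with properness. That Codazzi-driven argument, together with a careful check of how the two possible labellings $\spa\{e_1\},\spa\{e_2\}$ of $T^F$ match up as $p$ ranges over $\{A^f_{H^f}\neq0\}$, is where I expect the real work to lie; everything else is the bookkeeping indicated above.
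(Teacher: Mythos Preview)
Your Steps 1--3 and the biconservativity half of Step 4 are essentially identical to the paper's proof: the same reduction via quasi-minimality and constant curvature to $\xi_1h^3_{22}e_1+\xi_2h^3_{11}e_2=0$, the same ``WLOG one $h^3_{ii}\neq0$, then properness forces the other $\xi_j\neq0$, hence the other $h^3_{jj}=0$'' argument for the forward direction, and the same ``$T^F=\spa\{e_i\}$ and $\nabla^\perp_{e_i}e_3=0$ kill the corresponding term'' argument for the converse.

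Where you diverge is in worrying, at the end of Step 4, about recovering \emph{properness} in the converse. You are right that this is not established by the frame computation, but your proposed Codazzi repair will not work and is not needed to match the paper. The paper's own proof of the sufficient condition stops exactly where you stopped before raising the objection: it only verifies \eqref{PropRnsc1Eq1}, i.e.\ biconservativity, and never argues that $\nabla^\perp H^f$ is nowhere zero. In fact no integrability argument can supply this: the hypothesis in the statement is vacuous on the set $\{A^f_{H^f}=0\}$, and a quasi-minimal immersion with $A^f_{H^f}\equiv0$ and parallel mean curvature vector satisfies the hypothesis while failing to be proper. So the ``gap'' you detected is present in the paper as well; the intended content of the converse is only that $f$ is biconservative (with properness carried as an ambient standing assumption rather than deduced), and you have already proved that much.
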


\begin{proof}
Since the ambient space is $R^4_2(c)$, we have $\tr\big(\tilde R(\cdot,H^f)\cdot\big)^T=0$. Furthermore, being quasi-minimal of $f$ implies
$\grad \left(\tilde g(H^f,H^f)\right)=0$. Therefore, $f$ is biconservative if and only if 
\begin{equation}\label{ProofPropRnsc1Eq1}
\tr A^f_{\nabla^\perp_{\cdot}H^f}(\cdot)=0 
\end{equation}
because of Proposition \ref{LemmaIFFbic}. Note that \eqref{ProofPropRnsc1Eq1} is equivalent to
$$A^f_{\nabla^\perp_{e_1}e_3}(e_2)+A^f_{\nabla^\perp_{e_2}e_3}(e_1)=0$$
in terms of vector fields $e_1,e_2,e_3$ defined above. By considering \eqref{QuasiR42cShapeOp1a} and \eqref{QuasiR42xiidef}, we conclude that 
 $f$ is biconservative if and only if 
\begin{equation}\label{PropRnsc1Eq1} 
\xi_1h^3_{22}e_1+\xi_2h^3_{11}e_2=0.
\end{equation}
Note that being \textit{proper } of the biconservative immersion $f$ implies $\xi_1(q)\neq 0$ or $\xi_2(q)\neq 0$ at any point $q\in \Omega$. 

Now, in order to prove the necessary condition, assume that  $f$ is a proper biconservative immersion and let $A^f_{H^f}(p)\neq0$ at a point  $p$ of $\Omega$. Then, without loss of generality, we may assume $h^3_{22}(p)\neq 0$ on a neighboorhood $\mathcal N_p$ of $\Omega$. In this case, because of \eqref{PropRnsc1Eq1}, we have $\xi_1=0$ on $\mathcal N_p$ which implies $\xi_2(q)\neq 0$ for any $q\in\mathcal N_p$. Thus, \eqref{PropRnsc1Eq1} implies $h^3_{11}=0$ on $\mathcal N_p$. Put $F=\left.f\right|_{\mathcal N_p}$. Then, we have $T^F=\spa\{e_1\}$ which  is a degenerated distribution.  Moreover, since $\xi_1$ vanishes identically on $\mathcal N_p$, we have $\nabla^\perp_X H^F=0$ whenever $X\in T^F$. Hence, we have completed the proof of the necessary condition. 

For the proof of the sufficient condition, we consider the following two cases  separately. If $A^f_{H^f}=0$, then \eqref{QuasiR42cShapeOp1a} implies $h^3_{11}=h^3_{22}=0$. Therefore \eqref{PropRnsc1Eq1} is satisfied.  On the other hand, consider the case $A^f_{H^f}(p)\neq0$ on $\mathcal N_p$. Assume $T^F=\spa \{e_1\}$ for a light-like vector field $e_1$ and let $\nabla^\perp_{e_1} e_3=0$. Then, we have $h^3_{11}=\xi_1=0$. Therefore \eqref{PropRnsc1Eq1} is  satisfied again. Hence, the proof of the sufficient condition is completed.\end{proof}

Now, we study the case $c=0$. Let $f: (\Omega,g_m)\hookrightarrow \mathbb E^4_2$ be a proper biconservative quasi-minimal immersion, where $\Omega=I\times J$ and $g_m$ is the metric defined by  \eqref{DefMetricgm} for a $m\in C^\infty(\Omega)$. Assume that the Gaussian curvature $K$ of $\Omega$ does not vanish. Note that, because of the Gauss equation \eqref{MinkGaussEquation}, if $A^f_{H^f}=0$ at a point $p\in\Omega$, then the Gaussian curvature $K(p)=0$ which is a contradiction. Therefore, we have $A^f_{H^f}(q)\neq0$ for all  $q\in\Omega.$ Hence,  Proposition \ref{PropRnsc1} implies that  $T^f$ is a degenerated distribution along which  $H^f$ is parallel. In terms of a local pseudo-orthonormal frame field $\{e_1,e_2;e_3,e_4\}$, we have $A^f_{e_3} e_1=0$ and $\nabla^\perp_{e_1}e_3=0$, or, equivalently, $h^3_{11}=\xi_1=0$.  Now, $f$ is biharmonic if and only if  \eqref{BiharmonicEquationMostGeneral} is satisfied. However, since $\tilde R=0$, \eqref{BiharmonicEquationMostGeneral}  becomes
$$\nabla^\perp_{e_1}\nabla^\perp_{e_2}e_3- \nabla^\perp_{\nabla_{e_1}e_2}e_3 =\alpha_f(A^f_{e_3}e_2,e_1)$$
which is equivalent to the Ricci equation \eqref{MinkRicciEquation} for $X=e_1$, $Y=e_2$ and $\xi=e_3$. Hence, we have the following result.
\begin{thm}\label{ThmBICimpliesBIH}
Let $(\Omega,g)$ be a Lorentzian surface with the Gaussian curvature $K$ and $f: (\Omega,g)\hookrightarrow  \mathbb E^4_2$ a quasi-minimal isometric immersion. Assume that $K$ does not vanish. If $f$ is a proper biconservative immersion, then it is biharmonic.
\end{thm}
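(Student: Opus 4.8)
The plan is to reduce the biharmonicity condition to a single tensorial identity that turns out to be automatically true. Recall that, by Proposition \ref{prop:splitting}, $f$ is biharmonic precisely when both the tangential equation \eqref{BiconservativeEquationMostGeneral} and the normal equation \eqref{BiharmonicEquationMostGeneral} hold. Since $f$ is assumed proper biconservative, \eqref{BiconservativeEquationMostGeneral} already holds, so the entire task is to verify \eqref{BiharmonicEquationMostGeneral}. The ambient space is $\mathbb E^4_2$, so $\tilde R=0$ and \eqref{BiharmonicEquationMostGeneral} collapses to $\tr\alpha_f(A^f_{H^f}(\cdot),\cdot)-\Delta^\perp H^f=0$, i.e. $\Delta^\perp H^f=\tr\alpha_f(A^f_{H^f}(\cdot),\cdot)$.

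First I would record the structural consequences of the hypotheses. Because $K$ does not vanish, the Gauss equation \eqref{MinkGaussEquation} forces $A^f_{H^f}(q)\neq0$ for every $q\in\Omega$ (if it vanished at $p$, the right-hand side of \eqref{MinkGaussEquation} would vanish there, giving $K(p)=0$). Hence Proposition \ref{PropRnsc1} applies: around each point there is a pseudo-orthonormal frame $\{e_1,e_2;e_3,e_4\}$ (with $e_3=-H^f$, $\langle e_i,e_j\rangle=1-\delta_{ij}$, $\langle e_3,e_4\rangle=-1$) for which $T^f=\mathrm{span}\{e_1\}$ is degenerate and $H^f$ is parallel along $T^f$; in the notation \eqref{QuasiR42cSecFundForm1all}--\eqref{QuasiR42xiidef} this means $h^3_{11}=0$ and $\xi_1=0$. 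Next I would write out both sides of $\Delta^\perp H^f=\tr\alpha_f(A^f_{H^f}(\cdot),\cdot)$ in this frame. Using $\Delta^\perp=\nabla^\perp_{e_1}\nabla^\perp_{e_2}-\nabla^\perp_{\nabla_{e_1}e_2}+\nabla^\perp_{e_2}\nabla^\perp_{e_1}-\nabla^\perp_{\nabla_{e_2}e_1}$ applied to $e_3$, together with \eqref{QuasiR42xiidef} and $\xi_1=0$, the first and the fourth terms of the $e_2e_1$-block drop out in the relevant way, and one is left with exactly $\nabla^\perp_{e_1}\nabla^\perp_{e_2}e_3-\nabla^\perp_{\nabla_{e_1}e_2}e_3$ on the left. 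On the right, $\tr\alpha_f(A^f_{H^f}(\cdot),\cdot)=2\,\alpha_f(A^f_{e_3}e_2,e_1)$ after using $A^f_{e_3}e_1=-h^3_{11}e_2=0$ and the symmetry of $\alpha_f$; a brief normalization check of the trace convention against \eqref{MeanCurvVectFirstDef} shows the constant is consistent. Thus biharmonicity reduces to the single identity
\begin{equation}\label{ReducedBihId}
\nabla^\perp_{e_1}\nabla^\perp_{e_2}e_3-\nabla^\perp_{\nabla_{e_1}e_2}e_3=\alpha_f(A^f_{e_3}e_2,e_1).
\end{equation}

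The final step is the observation that \eqref{ReducedBihId} is nothing but the Ricci equation \eqref{MinkRicciEquation} specialized to $X=e_1$, $Y=e_2$, $\xi=e_3$. Indeed, with $\xi_1=0$ one has $\nabla^\perp_{e_1}e_3=0$, so $R^\perp(e_1,e_2)e_3=\nabla^\perp_{e_1}\nabla^\perp_{e_2}e_3-\nabla^\perp_{e_2}\nabla^\perp_{e_1}e_3-\nabla^\perp_{[e_1,e_2]}e_3=\nabla^\perp_{e_1}\nabla^\perp_{e_2}e_3-\nabla^\perp_{\nabla_{e_1}e_2}e_3$, using $[e_1,e_2]=\nabla_{e_1}e_2-\nabla_{e_2}e_1$ and again $\nabla^\perp_{e_1}e_3=0$ to kill the $\nabla^\perp_{\nabla_{e_2}e_1}e_3$ piece; and the right-hand side of \eqref{MinkRicciEquation} is $\alpha_f(e_1,A^f_{e_3}e_2)-\alpha_f(A^f_{e_3}e_1,e_2)=\alpha_f(A^f_{e_3}e_2,e_1)$ since $A^f_{e_3}e_1=0$. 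Since the Ricci equation always holds for an isometric immersion, \eqref{ReducedBihId} holds, hence \eqref{BiharmonicEquationMostGeneral} holds, hence $f$ is biharmonic. I expect the main (though still modest) obstacle to be the bookkeeping in the reduction of $\Delta^\perp H^f$: one must be careful with the sign conventions in \eqref{QuasiR42xiidef}, the fact that $e_3=-H^f$ rather than $H^f$, and the trace/normalization constant, so that the two sides match up without a stray factor; once the frame identities $h^3_{11}=\xi_1=0$ are in hand, the algebra is short and the identification with the Ricci equation is clean.
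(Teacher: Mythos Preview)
Your proposal is correct and follows essentially the same route as the paper: use $K\neq 0$ and the Gauss equation to force $A^f_{H^f}\neq 0$, invoke Proposition~\ref{PropRnsc1} to get $h^3_{11}=\xi_1=0$, and then observe that with these vanishings the normal biharmonic equation \eqref{BiharmonicEquationMostGeneral} (with $\tilde R=0$) reduces to the Ricci equation \eqref{MinkRicciEquation} for $X=e_1$, $Y=e_2$, $\xi=e_3$. The only wobble is the stray factor $2$ you mention on the right-hand side; in the null frame the trace gives $\alpha_f(A^f_{e_3}e_1,e_2)+\alpha_f(A^f_{e_3}e_2,e_1)=\alpha_f(A^f_{e_3}e_2,e_1)$ (no doubling, since $B(X,Y)=\alpha_f(A^f_{e_3}X,Y)$ is not symmetric), which is exactly what the Ricci equation supplies---so your hedge about checking the normalization is warranted, and once corrected everything matches.
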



\section{Biconservative Surfaces of $\mathbb E^4_2$}
In this section, we focus on immersions into the pseudo-Euclidean space  $\mathbb E^4_2$ with neutral metric. We get the complete local classification of quasi-minimal, biconservative surfaces.

First, we consider flat surfaces and get the following classification of biconservative surfaces. We want to note that the proof of this proposition immediately follows from the proof of \cite[Theorem 4.1]{ChenE42Flat}.
\begin{prop}\label{PropE42FLAT}
A flat surface in $\mathbb E^4_2$ is quasi-minimal and biconservative   if and only if locally congruent to one of the following surfaces:
\begin{enumerate}
\item[(i)] The surface given by  $f(s,t)=(\psi(s,t),\frac{s-t}{\sqrt2},\frac{s+t}{\sqrt2},\psi(s,t)),\ (s,t)\in U$,  where $\psi:U\to\mathbb R$ is a smooth function and $U$ is open in $\mathbb R^2$,

\item[(ii)] The surface given by  $f(s,t)=z(s)t +w(s),$  where $z(s)$ is a light-like curve in the light-cone $\mathcal LC$ and $w$ is a light-like curve satisfying $\langle z',w'\rangle=0$ and $\langle z,w'\rangle=-1$.
\end{enumerate}
\end{prop}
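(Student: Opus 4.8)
The plan is to prove both implications, the substance being the forward direction, which I would organize by whether $A^f_{H^f}$ vanishes identically, running in parallel with the classification of flat quasi-minimal surfaces in \cite[Theorem~4.1]{ChenE42Flat} while also imposing the biconservativity equation \eqref{PropRnsc1Eq1} from the proof of Proposition~\ref{PropRnsc1}.

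Suppose first $A^f_{H^f}\equiv0$. In the frame $\{e_1,e_2;e_3,e_4\}$ of Section~3 this forces $h^3_{11}=h^3_{22}=0$, so $\alpha_f$ is valued in the null line bundle $\mathrm{span}\{e_3\}$; then \eqref{PropRnsc1Eq1} holds automatically and every such surface is biconservative, so it remains only to identify it with (i). Since $A^f_{e_3}=-A^f_{H^f}=0$, the Ricci equation \eqref{MinkRicciEquation} gives $R^{\perp}(e_1,e_2)e_3=0$, hence locally $\nabla^{\perp}(\lambda e_3)=0$ for a positive function $\lambda$, so by \eqref{MEtomWeingarten} the null field $\lambda e_3$ is parallel in $\mathbb E^4_2$, i.e.\ a constant null vector $a_0$. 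Consequently $\langle f,a_0\rangle$ is constant and $f$ lies in a degenerate affine hyperplane $a_0^{\perp}+x_0$. Picking a null $b_0$ with $\langle a_0,b_0\rangle=-1$ and an orthonormal basis $\{v_1,v_2\}$ of the Lorentzian plane $(\mathrm{span}\{a_0,b_0\})^{\perp}$, and writing $f$ (up to a constant translation) as $p\,a_0+r\,v_1+u\,v_2$, one checks that the induced metric depends only on $(r,u)$; since it is non-degenerate, $(r,u)$ is a local diffeomorphism, and reparametrizing by it and then introducing null coordinates in the $(r,u)$-plane puts $f$ exactly in the form (i).

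Now suppose $A^f_{H^f}\not\equiv0$, so $A^f_{H^f}\neq0$ on a neighborhood. Quasi-minimality makes $A^f_{H^f}$ a nonzero, trace-free, self-adjoint operator on a Lorentzian plane, so if its kernel is nontrivial that kernel is a null line, and \eqref{PropRnsc1Eq1} then forces, say, $h^3_{11}=\xi_1=0$ with $h^3_{22}\neq0$. Putting $h^3_{11}=0$ together with $K=0$ into the Gauss equation \eqref{MinkGaussEquation} gives $h^4_{11}=0$, hence $\alpha_f(e_1,e_1)=0$: the null curves tangent to $e_1$ are pre-geodesics of $\mathbb E^4_2$. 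Choosing an affine parameter $t$ along them (so they become geodesics) and letting $s$ label them gives $f_{tt}=0$, i.e.\ $f(s,t)=z(s)t+w(s)$ with $z=f_t$; then $\langle z,z\rangle=0$, so $z\in\mathcal{LC}$; a direct computation of the Gaussian curvature of $z(s)t+w(s)$ gives $K=\langle z',z'\rangle$, so flatness yields $\langle z',z'\rangle=0$; and $\langle z,w'\rangle=\langle f_t,f_s\rangle\neq0$ because $f$ is an immersion. The complementary subcase, where $A^f_{H^f}$ has trivial kernel, forces $\nabla^{\perp}H^f=0$ through \eqref{PropRnsc1Eq1}, and one checks via the Gauss and Codazzi equations (compare \cite{ChenE42PMCV}) that it either does not arise under flatness or already produces a surface on the list.

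To reach the normalization in (ii) I would exploit the residual freedom $f(s,t)\mapsto a(s)\,z(\sigma(s))\,t+\bigl(b(s)\,z(\sigma(s))+w(\sigma(s))\bigr)$, which preserves both the parametrized foliation by null lines and the relations already secured: a second-order ODE for $\sigma$ achieves $\langle z',w'\rangle=0$, then $a=1/\sigma'$ normalizes $\langle z,w'\rangle=-1$, and a first-order linear ODE for $b$ achieves $\langle w',w'\rangle=0$, putting $f$ in the form (ii). The converse is a short verification: for (i) the derivatives $f_{ss},f_{st},f_{tt}$ are all proportional to the fixed null vector $(1,0,0,1)$, which is normal and parallel to $H^f$, so $A^f_{H^f}=0$, the induced metric is flat and degenerate, $H^f=-\psi_{st}(1,0,0,1)$ is light-like (and nonzero where $\psi_{st}\neq0$), and \eqref{PropRnsc1Eq1} holds trivially; for (ii) the induced metric is $-(ds\otimes dt+dt\otimes ds)$, hence flat, $H^f=-z'$ is light-like and nowhere zero, $T^f=\mathrm{span}\{\partial_t\}$ is degenerate, and $H^f$ does not depend on $t$ so $\nabla^{\perp}_{\partial_t}H^f=0$, whence Proposition~\ref{PropRnsc1} applies. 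The main obstacle is not any single computation but the bookkeeping in the case $A^f_{H^f}\not\equiv0$ — pinning down exactly which reparametrizations remain and checking solvability of the resulting ODEs — which is precisely what is inherited from the proof of \cite[Theorem~4.1]{ChenE42Flat}.
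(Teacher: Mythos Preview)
Your approach is essentially the same as the paper's: split according to whether $A^f_{H^f}$ vanishes (equivalently $h^3_{11}=h^3_{22}=0$) or not (leading, via flatness and the Gauss equation, to $h^3_{11}=h^4_{11}=0$), and then invoke the classification in \cite[Theorem~4.1]{ChenE42Flat}. The paper's proof is in fact just this two-case reduction followed by a reference to Chen; you supply considerably more of the underlying geometry (the constant-null-normal argument for case~(i), the ruled-surface argument for case~(ii), and the explicit reparametrization ODEs), and you also flag the parallel-mean-curvature subcase that the paper passes over silently. Your added detail is sound, with one small caveat: the formula ``$K=\langle z',z'\rangle$'' for a general parametrization $z(s)t+w(s)$ is only correct up to a nonvanishing factor depending on $\langle z,w'\rangle$, so the honest statement is that $K=0$ forces $\langle z',z'\rangle=0$, which is all you need.
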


\begin{proof}
A direct computation shows that the above surfaces are flat, quasi-minimal and biconservative. Conversely assume that $M$ is a flat quasi-minimal biconservative surface and $p\in M$. Consider a frame field $\{e_1,e_2;e_3,e_4\}$ described in Sect. 3 and let $h^\alpha_{ii}$ are functions defined by \eqref{QuasiR42cSecFundForm1all} and \eqref{QuasiR42xiidef}, respectively. Then, by  Proposition \ref{PropRnsc1}, we have two cases: $h^3_{11}=h^3_{22}=0$  and $h^3_{11}=h^4_{11}=0$. By considering the proof of \cite[Theorem 4.1]{ChenE42Flat}, one can conclude that $f$ is congruent to one of these two surfaces given in the proposition.
\end{proof}


Now, we are going to consider non-flat quasi-minimal surfaces with non-parallel mean curvature vector. First we define an intrinsic $L:\Omega\to\mathbb R$ of $(\Omega,g_m)$ by 
 \begin{equation}\label{DefinitionofL}
L= -\frac{K_t+mK_s+3m_sK}{K}.
\end{equation}
Then, we construct the following example of biconservative immersion from a non-flat two-dimensional Lorentzian manifold into $\mathbb E^4_2$. We would like to note that this immersion is also biharmonic because of Theorem \ref{ThmBICimpliesBIH}.

\begin{prop}\label{PropExample}
Let $\Omega=I \times J$ for some open intervals $I,J$ and $m\in C^\infty(\Omega)$ and assume that the intrinsic $L:\Omega\to\mathbb R$ of $(\Omega,g_m)$ satisfies $L=L(t)$. Consider a light-like curve $\alpha:J\hookrightarrow \mathbb E^4_2$ lying on $\mathcal LC$ such that
$V_t=\spa\{\alpha(t),\alpha'(t)\} $ is two dimensional for all $t\in J$. Assume that  $\eta:J\to\mathbb R^4$ satisfies the conditions 
\begin{subequations}\label{PropExampleStatementEq2}
\begin{eqnarray}
\label{PropExampleStatementEq2a}\langle \eta' ,\eta' \rangle&=&0,\\
\label{PropExampleStatementEq2b}\langle \alpha , \eta' \rangle&=&0,\\
\label{PropExampleStatementEq2c}\langle \eta' ,\alpha' \rangle&=&-\frac{1}{a },\\
\label{PropExampleStatementEq2d}\langle \eta' ,\alpha'' \rangle&=&\frac{2 a' -a  L }{a ^2}
\end{eqnarray}
\end{subequations}
for a function  $a\in C^\infty(J)$.  Then, the mapping
\begin{equation}\label{PropExampleStatementEq3}
\begin{array}{rcl}
f:(\Omega,g_m)&\longrightarrow& \mathbb E^4_2\\
f(s,t)&=&\eta(t)+\Big(s a'(t)-a(t) (m(s,t)+s L(t))\Big)\alpha(t)\\&&+sa(t)\alpha'(t) 
\end{array}
\end{equation}
is a quasi-minimal, proper biconservative isometric immersion.
\end{prop}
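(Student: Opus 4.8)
The plan is a direct verification in the flat ambient $\mathbb E^4_2$, using the connection coefficients of $g_m$ from Proposition~\ref{SemiGeodesicCoord}. First I would record what the hypotheses on $\alpha$ give: since $\alpha$ is a light-like curve lying on $\mathcal{LC}$ we have $\langle\alpha,\alpha\rangle=\langle\alpha',\alpha'\rangle=0$, hence, differentiating, $\langle\alpha,\alpha'\rangle=\langle\alpha,\alpha''\rangle=\langle\alpha',\alpha''\rangle=0$; thus $V_t=\spa\{\alpha,\alpha'\}$ is a $2$-dimensional totally isotropic subspace, so by Lemma~\ref{SubspaceInnProdWellKnowLemma1} $V_t^\perp=V_t$, and in particular $\alpha''\in V_t$ and $\langle\alpha'',\alpha''\rangle=0$. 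Then I would compute
$$f_s=(a'-am_s-aL)\,\alpha+a\,\alpha',\qquad f_t=\eta'+(\cdots)\alpha+(2sa'-am-saL)\,\alpha'+sa\,\alpha''$$
and, inserting these together with \eqref{PropExampleStatementEq2a}--\eqref{PropExampleStatementEq2d} into the inner products, obtain $\langle f_s,f_s\rangle=0$, $\langle f_s,f_t\rangle=-1$, $\langle f_t,f_t\rangle=2m$. Hence $f^*\langle\cdot\,,\cdot\rangle=g_m$, and since $\langle f_s,f_t\rangle\ne0$ the vectors $f_s,f_t$ are linearly independent, so $f$ is an isometric immersion of $(\Omega,g_m)$; the hypothesis $L=L(t)$ is precisely what makes \eqref{PropExampleStatementEq3} compatible with $g_m$ and with the system \eqref{PropExampleStatementEq2a}--\eqref{PropExampleStatementEq2d}.

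Next I would compute the second fundamental form. Since $\mathbb E^4_2$ is flat, $\widetilde\nabla_{\partial_a}\partial_b=f_{ab}$, so the Gauss formula \eqref{MEtomGauss} and Proposition~\ref{SemiGeodesicCoord} yield
$$\alpha_f(\partial_s,\partial_s)=f_{ss}=-aK\,\alpha,\qquad \alpha_f(\partial_s,\partial_t)=f_{st}+m_sf_s,\qquad K=m_{ss},$$
and both belong to $V_t$. Consequently the mean curvature vector \eqref{MeanCurvVectFirstDef}, which equals $-m\,\alpha_f(\partial_s,\partial_s)-\alpha_f(\partial_s,\partial_t)$ (half the $g_m$-trace of $\alpha_f$, the coefficient $m$ being the metric function of \eqref{DefMetricgm}), lies in $V_t$, so $\langle H^f,H^f\rangle=0$. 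Writing $H^f=p\alpha+q\alpha'$ and using $\langle\alpha,f_t\rangle=0$ and $\langle\alpha',f_t\rangle=-1/a$ (which are \eqref{PropExampleStatementEq2b}, \eqref{PropExampleStatementEq2c}) together with the normality of $H^f$ gives $q=0$, i.e.\ $H^f=p\,\alpha$; an inspection of the coefficient $p$ shows $p\ne0$, so $f$ is quasi-minimal. Moreover $\alpha_f(\partial_s,\cdot)$ and $H^f$ both lie in the totally isotropic $V_t$, so $\langle\alpha_f(\partial_s,\cdot),H^f\rangle\equiv0$, i.e.\ $A^f_{H^f}\partial_s=0$, and the null field $\partial_s$ lies in $T^f$.

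For biconservativity, note first that since $K\ne0$ the Gauss equation \eqref{MinkGaussEquation} forbids $A^f_{H^f}=0$ anywhere (cf.\ the discussion preceding Theorem~\ref{ThmBICimpliesBIH}), whence $T^f=\spa\{\partial_s\}$, a degenerate line field. By the Weingarten formula \eqref{MEtomWeingarten} and $A^f_{H^f}\partial_s=0$, $\nabla^\perp_{\partial_s}H^f=\widetilde\nabla_{\partial_s}H^f=\partial_s(p\,\alpha)=p_s\,\alpha$; differentiating the explicit form of $p$ in $s$ (here $L=L(t)$ annihilates the $\partial_s$-derivatives of the purely $t$-dependent terms, and $m_{sst}=K_t$) gives $p_s=a\,(K_t+mK_s+3m_sK+LK)$, which vanishes identically \emph{precisely by the definition \eqref{DefinitionofL} of $L$}. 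Hence $\nabla^\perp_{\partial_s}H^f=0$: $H^f$ is parallel along $T^f$, and Proposition~\ref{PropRnsc1} applies, so $f$ is biconservative. (Equivalently, in the frame of Section~3 one has $h^3_{11}=\xi_1=0$, so the criterion \eqref{PropRnsc1Eq1} is satisfied.)

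Finally, for properness I would compute $\nabla^\perp_{\partial_t}H^f$, the normal part of $\partial_t(p\,\alpha)=p_t\,\alpha+p\,\alpha'$. Since $\langle\alpha',f_s\rangle=0$ while $\langle\alpha',f_t\rangle=-1/a\ne0$, the normal component of $\alpha'$ is a nonzero multiple of $\alpha$, and one finds $\nabla^\perp_{\partial_t}H^f=\big(p_t-\tfrac{a'-am_s-aL}{a}\,p\big)\alpha$, which is nonzero — otherwise $H^f$ would be parallel on the whole normal bundle, and then, by the Ricci equation \eqref{MinkRicciEquation} together with $K\ne0$, $A^f_{H^f}$ would vanish, contradicting the Gauss equation \eqref{MinkGaussEquation}. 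Hence $\nabla^\perp H^f\ne0$ and $f$ is a proper biconservative immersion, as claimed; by Theorem~\ref{ThmBICimpliesBIH} it is moreover biharmonic. The crux of the argument — and the only genuinely delicate point — is the evaluation of $p_s$ in the third step: one must push the differentiation of the $\alpha$-coefficient of $H^f$ far enough to see that the surviving combination is exactly $-LK$ by \eqref{DefinitionofL}; the rest of the bookkeeping (in particular verifying $p\ne0$ and $p_t\ne\tfrac{a'-am_s-aL}{a}\,p$) is lengthy but mechanical.
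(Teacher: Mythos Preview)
Your approach is essentially the same as the paper's: both verify directly that $f$ is an isometric immersion, compute that $H^f$ is a nowhere-zero multiple of $\alpha$ (hence light-like), and then check $h^3_{11}=0$ and $\nabla^\perp_{\partial_s}H^f=0$ so that Proposition~\ref{PropRnsc1} applies. You are in fact more explicit than the paper on the key step---the paper simply asserts $-H^f=B(t)\alpha(t)$, whereas you actually compute $p_s=a(K_t+mK_s+3m_sK+LK)$ and invoke \eqref{DefinitionofL} to see it vanishes; your properness argument (which the paper's proof omits entirely) only rules out $\nabla^\perp H^f\equiv0$ on an open set rather than pointwise vanishing, but this minor gap is shared with the paper.
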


\begin{proof}
Since the light-like curve $\alpha$ lies on $\mathcal LC$, we have
\begin{equation}\label{Condsonalpha} 
\langle \alpha,\alpha\rangle=\langle \alpha',\alpha'\rangle=0
\end{equation}
which implies $\left\langle \cdot\,, \cdot\rangle\right|_{V_t}=0$. Thus, we have $V_t\subset V_t^\perp$. Therefore, Lemma \ref{SubspaceInnProdWellKnowLemma1} implies 
$V_t= V_t^\perp$. Note that \eqref{Condsonalpha} also gives $\alpha''(t)\in V_t^\perp=V_t.$ Thus, we have
\begin{equation}\label{PropExampleStatementEq4}
\alpha''(t)=A(t)\alpha(t)+\frac{a(t)L(t)-2a'(t)}{a(t)}\alpha'
\end{equation}
for a smooth function $A$ because of \eqref{PropExampleStatementEq2b}-\eqref{PropExampleStatementEq2d}. By a direct computation considering \eqref{PropExampleStatementEq3} and \eqref{PropExampleStatementEq4} we  obtain $\langle f_s,f_s\rangle=0$, $\langle f_s,f_t\rangle=-1$ and $\langle f_t,f_t\rangle=2m$ which yields that $f$ is an isometric immersion. 

By a further computation, we get
$$e_3=\alpha_f (e_1,e_2)=-H^f=B(t)\alpha(t)\quad\mbox{ and } \quad \alpha_f (e_1,e_1)=a(t)m_{ss}(s,t)\alpha(t).$$
Therefore, we have $h^3_{11}=0$ and $\nabla^\perp_{e_1}e_3=0$ which yields that $T^f=\spa\{\partial_s\}$ and $H^f$ is parallel along $T^f$.  Hence, Proposition \ref{PropRnsc1} yields that $f$ is biconservative.
\end{proof}

In the remaining part of this section we are going to show that the converse of Proposition \ref{PropExample} is also true.
\begin{lem}\label{LemLCConditions1}
 Let  $f: (\Omega,g_m)\hookrightarrow  \mathbb E^4_2$ be a  quasi-minimal immersion and assume that the Gaussian curvature of $(\Omega,g_m)$ does not vanish. Consider the pseudo-orthonormal  frame field $\{e_1,e_2;e_3,e_4\}$ such that $e_1=f_*\partial_s$, $e_2=f_*\left(m\partial_s+\partial_t\right)$ and $e_3=-H^f$.
If $f$ is  proper biconservative and $T^f=\spa\{\partial_s\}$, then the Levi Civita connection of $\mathbb E^4_2$ satisfies
\begin{subequations}\label{LCConditions1}
\begin{eqnarray}
\label{LCConditions1a} \widetilde\nabla_{ {e_1}} {e_1}=-am_{ss} {e_3},  &\qquad &\widetilde\nabla_{ {e_2}}{ {e_1}}=- m_s{ {e_1}}+ {e_3},\\
\label{LCConditions1b} \widetilde\nabla_{ {e_1}}{ {e_2}}= {e_3}, &\qquad &\widetilde\nabla_{ {e_2}} {e_2}=  m_s{ {e_2}}+(m+bs-z)  {e_3}-\frac 1{a} {e_4},\\
\label{LCConditions1c} \widetilde\nabla_{ {e_1}} {e_3}=0, &\qquad &\widetilde\nabla_{ {e_2}} {e_3}=\frac 1{a}{ {e_1}}+(m_s+b) {e_3},\\
\nonumber \widetilde\nabla_{ {e_1}} {e_4}=-{ {e_1}}+am_{ss}{ {e_2}}, &\qquad & \widetilde\nabla_{ {e_2}} {e_4}=(z-m-bs){ {e_1}}-{ {e_2}}-(m_s+b) {e_4}
\end{eqnarray} 
for some smooth functions $a,b,z$ such that $e_1(a)=e_1(b)=e_1(z)=0$ and
\begin{equation}\label{LemLCCequation}
b+\frac{a'}{a}=L.
\end{equation}
\end{subequations}
\end{lem}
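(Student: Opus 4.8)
The plan is to extract all the structural equations directly from the hypotheses ``$f$ is proper biconservative'' and ``$T^f=\spa\{\partial_s\}$'', and then to organize the information into the list \eqref{LCConditions1}. First I would record the consequences of $T^f=\spa\{\partial_s\}$: writing $e_1=f_*\partial_s$, $e_2=f_*(m\partial_s+\partial_t)$, one checks from \eqref{DefMetricgm} that $\{e_1,e_2\}$ is a pseudo-orthonormal tangent frame in the sense $\langle e_i,e_j\rangle=1-\delta_{ij}$, and the connection coefficients $\phi_i$ of \eqref{LorentR42cLeviCitia1ALL} are read off from Proposition \ref{SemiGeodesicCoord}, giving $\nabla_{e_1}e_1=0$, $\nabla_{e_2}e_1=-m_s e_1$, and $\nabla_{e_1}e_2=0$, $\nabla_{e_2}e_2=m_s e_2$ (the extra $\partial_s$-term in $\nabla_{\partial_t}\partial_t$ being absorbed by the change of frame). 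Since $e_1$ spans $T^f=\ker A^f_{H^f}$ we get $h^3_{11}=0$, and since $f$ is proper biconservative Proposition \ref{PropRnsc1} forces $H^f$ to be parallel along $T^f$, i.e. $\nabla^\perp_{e_1}e_3=0$, hence $\xi_1=0$ in \eqref{QuasiR42xiidef}. Setting $a:=h^4_{11}$, $-b:=-(m_s+\xi_2)$ appropriately (i.e. defining $b$ so that $\xi_2=m_s+b$) and $z$ to be the remaining free coefficient in $\alpha_f(e_2,e_2)$ along $e_3$, plugging these into \eqref{QuasiR42cSecFundForm1all}, \eqref{QuasiR42cShapeOp1all}, \eqref{QuasiR42xiidef} and combining with the Gauss–Weingarten formulas \eqref{MEtomGauss}--\eqref{MEtomWeingarten} yields immediately every identity in \eqref{LCConditions1a}--\eqref{LCConditions1c} except the precise form of the coefficient $(m+bs-z)$ and the claim $h^4_{22}=z-m-bs$; for this one uses $h^4_{11}h^4_{22}-1=$ (Gauss equation term) together with \eqref{LorentGaussCurva}, $K=m_{ss}$, which also identifies $a m_{ss}=a K$ as the coefficient appearing in $\widetilde\nabla_{e_1}e_1$.

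Next I would establish the ODE-in-$s$ statements $e_1(a)=e_1(b)=e_1(z)=0$. These come from the Codazzi equation \eqref{MinkCodazzi}. Applying $(\bar\nabla_{e_1}\alpha_f)(e_1,e_2)=(\bar\nabla_{e_2}\alpha_f)(e_1,e_1)$ and taking components along $e_3$ and $e_4$, and using $\nabla^\perp_{e_1}e_3=0$, $h^3_{11}=0$, one gets first-order differential relations whose $e_1$-parts read $e_1(a)=0$ and $e_1(b)=0$; similarly the $(e_1,e_2)$–$(e_2,e_2)$ Codazzi identity, after substituting the already-derived expressions, gives $e_1(z)=0$. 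In each case the point is that $e_1=\partial_s$ and that the functions $m_s$, $m_{ss}$, $K$ have no hidden $s$-constancy, so the surviving terms must vanish. Throughout I would also keep the Ricci equation \eqref{MinkRicciEquation} at hand as a consistency check on the coefficients of $\widetilde\nabla_{e_i}e_4$, which are then forced by $\langle e_3,e_4\rangle=-1$, $\langle e_4,e_4\rangle=0$ and $\nabla^\perp_{e_i}e_4=-\xi_i e_4$.

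Finally, for \eqref{LemLCCequation}, i.e. $b+a'/a=L$, I would compare the intrinsic quantity $L$ defined in \eqref{DefinitionofL}, $L=-(K_t+mK_s+3m_sK)/K$, with the extrinsic data. Using $K=m_{ss}$ from \eqref{LorentGaussCurva} one rewrites $L$ purely in terms of $m$; on the other hand, the functions $a,b$ satisfy a differential relation obtained by combining the Codazzi identities above with the explicit connection coefficients of Proposition \ref{SemiGeodesicCoord} — concretely, differentiating the relation that produced $e_1(b)=0$ in the $e_2$-direction, or equivalently reading off the $e_3$-component of the Codazzi equation for $(e_1,e_2)$ versus $(e_2,e_2)$ once more but now retaining the $e_2$-derivative terms, yields an equation of the form $b_t + \text{(lower order in } m)=\dots$ which integrates to $b+a'/a=L$ up to a function that $e_1$ annihilates, and a normalization check (e.g. evaluating the leading behavior, or noting both sides are independent of $s$) pins down the constant. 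The main obstacle I anticipate is precisely this last step: keeping the bookkeeping of which Codazzi/Gauss components have been used straight, and matching the intrinsic formula \eqref{DefinitionofL} against the extrinsic coefficients without sign or factor errors — the earlier parts are essentially a direct substitution, but identifying $L$ requires carefully tracking the $t$-derivatives of $m_s$ and $K=m_{ss}$ and recognizing the combination $K_t+mK_s+3m_sK$ emerging from the structure equations.
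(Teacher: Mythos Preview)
Your overall scaffolding is the right one --- invoke Proposition~\ref{PropRnsc1} to get $h^3_{11}=0$ and $\xi_1=0$, read off $\phi_1=0$, $\phi_2=-m_s$ from Proposition~\ref{SemiGeodesicCoord}, and then mine the Gauss, Codazzi and Ricci equations for the remaining coefficients. However, your identification of the function $a$ is wrong, and this derails the rest of the argument.

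You set $a:=h^4_{11}$. But in the statement of the lemma the coefficient of $e_3$ in $\widetilde\nabla_{e_1}e_1$ is $-am_{ss}$, while the Gauss--Weingarten formula gives that coefficient as $-h^4_{11}$. So one needs $h^4_{11}=am_{ss}$, not $h^4_{11}=a$. In fact $h^4_{11}$ is \emph{not} independent of $s$ in general (it carries the factor $m_{ss}$), so with your definition the claim $e_1(a)=0$ would simply be false. The correct route, as in the paper, is the reverse: from Codazzi with $X=Z=e_2$, $Y=e_1$ one gets $e_1(h^3_{22})=0$ and $e_1(h^4_{22})=-\xi_2$; from Ricci one gets $e_1(\xi_2)=K=m_{ss}$; and from the Gauss equation one gets $h^4_{11}h^3_{22}=K$ (your ``$h^4_{11}h^4_{22}-1=\ldots$'' is not the right product). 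Integrating these in $s$ produces $h^3_{22}=1/a(t)$, $\xi_2=m_s+b(t)$, $h^4_{22}=-m-bs+z(t)$, and then Gauss forces $h^4_{11}=a\,m_{ss}$. So $a,b,z$ are \emph{defined} as integration constants and $e_1(a)=e_1(b)=e_1(z)=0$ holds by construction; there is no separate Codazzi step to prove it.

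Finally, the relation $b+a'/a=L$ is not obtained by differentiating a previously used identity in the $e_2$-direction, as you suggest. It comes from a Codazzi component you have not yet used: take $X=Z=e_1$, $Y=e_2$ in \eqref{MinkCodazzi}. The left side vanishes (since $\xi_1=0$ and $\nabla_{e_1}e_i$ are trivial), and the right side, after substituting $h^4_{11}=am_{ss}$ and $\xi_2=m_s+b$, yields
\[
K(a'+ab)+(K_t+mK_s+3m_sK)\,a=0,
\]
which together with \eqref{DefinitionofL} is exactly \eqref{LemLCCequation}. This is a one-line computation once the correct $a$ is in hand; the bookkeeping difficulty you anticipate largely disappears when $a$ is identified with $1/h^3_{22}$ rather than with $h^4_{11}$.
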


\begin{proof}
 Assume that $f$ is  proper biconservative. Then, we have $h^4_{11}=\xi_1=0$ because of  Proposition \ref{PropRnsc1}.   Thus,  we have 
\begin{align}\label{LCConditions2}
\begin{split}
 \widetilde\nabla_{ {e_1}} {e_1}=\phi_1 e_1-h^4_{11} {e_3},  \qquad &\widetilde\nabla_{ {e_2}}{ {e_1}}= \phi_2{ {e_1}}+ {e_3},\\
 \widetilde\nabla_{ {e_1}}{ {e_2}}=-\phi_1 e_2+ {e_3}, \qquad &\widetilde\nabla_{ {e_2}} {e_2}= -\phi_2{ {e_2}}-h^4_{22}  {e_3}-h^3_{22} {e_4},\\
 \widetilde\nabla_{ {e_1}} {e_3}=0, \qquad &\widetilde\nabla_{ {e_2}} {e_3}=h^3_{22}{ {e_1}}+\xi_2 {e_3},\\
 \widetilde\nabla_{ {e_1}} {e_4}=-{ {e_1}}+h^4_{11}{ {e_2}}, \qquad & \widetilde\nabla_{ {e_2}} {e_4}=h^4_{22}{ {e_1}}-{ {e_2}}-\xi_2 {e_4}.
\end{split}
\end{align}
Note that Proposition \ref{SemiGeodesicCoord} implies 
\begin{equation}\label{GaussFormulaForCase}
\phi_1=0\quad\mbox{ and }\quad\phi_2=-m_s.
\end{equation}

On the other hand, Codazzi equation \eqref{MinkCodazzi} for $X=Z=e_2$, $Y=e_1$ gives
\begin{equation}
\label{Cod1ForCase} e_1(h^4_{22})=-\xi_2, \quad e_1(h^3_{22})=0, 
\end{equation}
Furthermore, by using Gauss equation \eqref{MinkGaussEquation} and Ricci equation \eqref{MinkRicciEquation},  we get
\begin{eqnarray}
\label{GaussEqForCase} h^4_{11}h^3_{22}=K=m_{ss} &\quad&\\ 
\label{RicciEqForCase} e_1(\xi_2)=K=m_{ss}. &\quad&
\end{eqnarray}
By  taking into account $e_1=f_*\partial_s$, $e_2=f_*\left(m\partial_s+\partial_t\right)$, we consider \eqref{Cod1ForCase}, \eqref{GaussEqForCase} and \eqref{RicciEqForCase} to get
\begin{align}\label{LCConditions3}
\begin{split}
h^3_{22}(s,t)=\frac 1{a(t)}, &\quad h^4_{11}(s,t)=a(t)m_{ss}(s,t),\\
h^4_{22}(s,t)=-m(s,t)-b(t)s+z(t), &\quad \xi_2(s,t)=m_s(s,t)+b(t) 
\end{split}
\end{align}
for some   $a,b,z\in C^\infty(J)$. Finally, by combining \eqref{GaussFormulaForCase} and \eqref{LCConditions3} with \eqref{LCConditions2}, we obtain \eqref{LCConditions1}.

On the other hand,  Codazzi equation \eqref{MinkCodazzi} for $X=Z=e_1$, $Y=e_2$ gives 
 \begin{equation*}
K(a'+ab)+(K_t+mK_s+3m_sK)a=0.
\end{equation*}
By combining this equation with \eqref{DefinitionofL}, we get \eqref{LemLCCequation}.
\end{proof}


Next, we get a necessary and sufficient condition for the existence of biconservative immersions from a Lorentzian surface $(\Omega,g_m)$.

\begin{prop}
Let $m\in C^\infty(\Omega)$ and $\Omega=I \times J$ for some open intervals $I,J$ and consider the Lorentzian surface $(\Omega,g_m)$ with non-vanishing Gaussian curvature, where $g_m$ is the metric defined by  \eqref{DefMetricgm}.  Then, $(\Omega,g_m)$ admits a quasi-minimal, proper  biconservative  isometric immersion with non-parallel mean curvature vector such that $T^f=\spa\{\partial_s\}$ if and only if $L=L(t)$. 
\end{prop}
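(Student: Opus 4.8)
The plan is to establish the two implications separately. The implication that existence forces $L=L(t)$ is immediate from Lemma \ref{LemLCConditions1}: if $f:(\Omega,g_m)\hookrightarrow\mathbb E^4_2$ is a quasi-minimal, proper biconservative immersion with non-parallel mean curvature vector and $T^f=\spa\{\partial_s\}$, then, $K$ being non-vanishing, Lemma \ref{LemLCConditions1} supplies smooth functions $a,b,z$ with $e_1(a)=e_1(b)=e_1(z)=0$ and $b+a'/a=L$. Since $e_1=f_*\partial_s$, the conditions $e_1(a)=e_1(b)=0$ say precisely that $a$ and $b$ are functions of $t$ only, so $L=b+a'/a$ depends on $t$ alone.

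For the converse, assume $L=L(t)$ and that $m\in C^\infty(\Omega)$ is given with $m_{ss}=K\neq0$; the idea is to produce the auxiliary data required by Proposition \ref{PropExample}. I would first fix a light-like curve $\alpha:J\hookrightarrow\mathbb E^4_2$ on $\mathcal LC$ with $\alpha(t),\alpha'(t)$ linearly independent --- for instance $\alpha(t)=(\cos t,\sin t,\cos t,\sin t)$, for which $\alpha''=-\alpha$. From $\langle\alpha,\alpha\rangle=\langle\alpha',\alpha'\rangle\equiv0$ one gets $\langle\alpha,\alpha'\rangle=0$, so $V_t=\spa\{\alpha,\alpha'\}$ is totally degenerate, whence $V_t=V_t^\perp$ by Lemma \ref{SubspaceInnProdWellKnowLemma1}, and in particular $\alpha''\in V_t$, say $\alpha''=A\alpha+D\alpha'$ with $A,D\in C^\infty(J)$. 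Choosing a smooth null frame $\{\beta(t),\gamma(t)\}$ of a complementary null plane normalized by $\langle\alpha,\beta\rangle=\langle\alpha',\gamma\rangle=1$ and $\langle\alpha,\gamma\rangle=\langle\alpha',\beta\rangle=0$ (for the sample $\alpha$ one may take $\gamma(t)=\tfrac12(-\sin t,\cos t,\sin t,-\cos t)$), I would put
\begin{equation*}
a(t)=\exp\!\left(\tfrac12\int_{t_0}^{t}\big(L(\tau)-D(\tau)\big)\,d\tau\right),\qquad \eta(t)=-\int_{t_0}^{t}\frac{\gamma(\tau)}{a(\tau)}\,d\tau .
\end{equation*}
A short computation gives $\langle\eta',\eta'\rangle=0$, $\langle\alpha,\eta'\rangle=0$, $\langle\alpha',\eta'\rangle=-1/a$ and $\langle\alpha'',\eta'\rangle=-D/a$, and since $a'/a=\tfrac12(L-D)$ this last quantity equals $(2a'-aL)/a^2$; hence \eqref{PropExampleStatementEq2a}--\eqref{PropExampleStatementEq2d} all hold. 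Proposition \ref{PropExample} then yields a quasi-minimal, proper biconservative isometric immersion $f:(\Omega,g_m)\hookrightarrow\mathbb E^4_2$ with $T^f=\spa\{\partial_s\}$, and being proper it has non-parallel mean curvature vector.

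The one point calling for genuine care is the smooth choice of the complementary null frame $\{\beta,\gamma\}$ along a general $\alpha$ --- that is, the smooth normalization of a maximal null plane complementary to $V_t$; this can be arranged smoothly (for instance by the implicit function theorem applied to the defining equations of $\gamma$, or because the interval $J$ is contractible) and is in any case bypassed by committing to the explicit $\alpha$ above. Everything else is routine bookkeeping, the substantive content being already contained in Proposition \ref{PropExample}. As an alternative proof of the converse one may integrate the frame system \eqref{LCConditions1} directly: since $L=L(t)$ one can choose $a,b,z$ as functions of $t$ with $b+a'/a=L$, the Gauss, Codazzi and Ricci equations are then satisfied by the computation carried out in the proof of Lemma \ref{LemLCConditions1}, and the fundamental existence theorem for surfaces in $\mathbb E^4_2$ produces the desired immersion.
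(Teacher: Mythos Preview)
Your proof is correct and follows the same two-step strategy as the paper: the necessary direction via Lemma~\ref{LemLCConditions1} (equation \eqref{LemLCCequation} with $a,b$ depending only on $t$) and the sufficient direction via Proposition~\ref{PropExample}. The paper's converse is a single sentence invoking Proposition~\ref{PropExample}, tacitly assuming that the data $\alpha,a,\eta$ can be produced; your explicit construction of these (with the concrete $\alpha$ on $\mathcal{LC}$, the choice $a=\exp\tfrac12\!\int(L-D)$, and the integral formula for $\eta$) fills that gap and is a genuine, if minor, improvement in rigor.
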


\begin{proof}
In order to prove necessary condition, we assume the existence of such immersion $f$. Then, the Levi-Civita  connection $\widetilde\nabla$ satisfies \eqref{LCConditions1} because of Lemma \ref{LemLCConditions1}. \eqref{LemLCCequation} yields $\partial_s(L)=0$. Conversely, if $L=L(t)$, the immersion $f$ described by \eqref{PropExampleStatementEq3} is proper biconservative by Proposition \eqref{PropExample}. Hence, the proof is completed.
\end{proof}

\begin{thm}\label{LCTnonflat} 
If $M$ is a  proper biconservative, quasi-minimal surface with non-vanishing Gaussian curvature, then
it is locally congruent to the image $f(\Omega)$ of the isometric immersion $f$ given in Proposition \ref{PropExample}. 
\end{thm}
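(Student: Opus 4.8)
The plan is to reverse the construction of Proposition \ref{PropExample}: starting from such an $M$, put it into the normal form of Lemma \ref{LemLCConditions1} and then \emph{integrate} the structure equations \eqref{LCConditions1} of the (flat) ambient connection to recover the immersion explicitly in the shape \eqref{PropExampleStatementEq3}.

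First I would localize. Fix $p\in M$. Since the Gaussian curvature does not vanish, the Gauss equation \eqref{MinkGaussEquation} forces $A^f_{H^f}(p)\neq 0$ (the observation preceding Theorem \ref{ThmBICimpliesBIH}); hence, by Proposition \ref{PropRnsc1}, on a neighbourhood of $p$ the distribution $T^f$ is a degenerate — i.e.\ null — line field along which $H^f$ is parallel. Passing to the semi-geodesic coordinates of Proposition \ref{SemiGeodesicCoord} with the null coordinate direction chosen so that $T^f=\spa\{\partial_s\}$, and taking the pseudo-orthonormal frame $\{e_1,e_2;e_3,e_4\}$ with $e_1=f_*\partial_s$, $e_2=f_*(m\partial_s+\partial_t)$, $e_3=-H^f$, Lemma \ref{LemLCConditions1} applies and yields \eqref{LCConditions1} for smooth functions $a(t),b(t),z(t)$ with $b+a'/a=L$; since $a$ and $b$ depend only on $t$, this relation already forces $L=L(t)$, and $a$ is nowhere zero because $h^3_{22}=1/a$ while $A^f_{H^f}\neq0$.

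Now I would integrate \eqref{LCConditions1}. Viewing the $e_\alpha$ as $\mathbb E^4_2$-valued maps, $\widetilde\nabla$ is ordinary differentiation, so $\partial_s e_3=\widetilde\nabla_{e_1}e_3=0$ by \eqref{LCConditions1c}; thus $e_3$ is a function of $t$ alone, and being light-like it traces a curve $\alpha(t):=e_3(t)$ on $\mathcal{LC}$. The second relation in \eqref{LCConditions1c}, together with $\partial_s e_3=0$, gives $\alpha'(t)=\widetilde\nabla_{e_2}e_3=\frac{1}{a}e_1+(m_s+b)\alpha$, whence, using $b=L-a'/a$,
\begin{equation*}
f_s=e_1=a(t)\alpha'(t)+\big(a'(t)-a(t)(m_s(s,t)+L(t))\big)\alpha(t).
\end{equation*}
Since $a,a',L,\alpha,\alpha'$ are independent of $s$, integrating in $s$ and absorbing the $s$-independent term into a vector-valued function $\eta(t)$ yields exactly
\begin{equation*}
f(s,t)=\eta(t)+\big(s a'(t)-a(t)(m(s,t)+s L(t))\big)\alpha(t)+s a(t)\alpha'(t),
\end{equation*}
which is \eqref{PropExampleStatementEq3}; here $V_t=\spa\{\alpha,\alpha'\}$ is two-dimensional because $\alpha=e_3$ is normal to $M$ while $a\alpha'-a(m_s+b)\alpha=e_1$ is a nonzero tangent vector.

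It then remains to check that $\eta$ obeys \eqref{PropExampleStatementEq2a}--\eqref{PropExampleStatementEq2d}. Writing $\eta(t)=f(0,t)+a(t)m(0,t)\alpha(t)$, one differentiates in $t$ and expands $\langle\eta',\eta'\rangle$, $\langle\alpha,\eta'\rangle$, $\langle\eta',\alpha'\rangle$ and $\langle\eta',\alpha''\rangle$ using only $\langle e_i,e_j\rangle=1-\delta_{ij}$, $\langle e_3,e_4\rangle=-1$, the fact that $e_3$ is normal and null (so $\langle\alpha,\alpha\rangle=\langle\alpha,\alpha'\rangle=0$), and \eqref{LCConditions1} to evaluate the needed $t$-derivatives $\partial_t e_1$, $\alpha''$, etc.; this is a routine computation reproducing the right-hand sides of \eqref{PropExampleStatementEq2}. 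Consequently the data $(m,L,\alpha,\eta,a)$ satisfy all the hypotheses of Proposition \ref{PropExample}, so on the chosen neighbourhood $M=f(\Omega)$ is one of the surfaces described there, which is the assertion. The only real difficulty is bookkeeping — tracking the $s$-constant of integration correctly in the third step and carrying out the verification of the four conditions on $\eta'$ without sign slips; conceptually the argument is just integration of the already-solved structure equations (note that Theorem \ref{ThmBICimpliesBIH} is not used here, though it explains why all these surfaces are in fact biharmonic).
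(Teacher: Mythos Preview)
Your proposal is correct and follows essentially the same route as the paper: localize via Proposition~\ref{PropRnsc1}, pass to the frame of Lemma~\ref{LemLCConditions1}, and integrate the ambient structure equations \eqref{LCConditions1} to recover $f$ in the form \eqref{PropExampleStatementEq3}, then read off the conditions \eqref{PropExampleStatementEq2} on $\eta$. The only (harmless) differences are that you integrate $f_s=e_1$ once after solving \eqref{LCConditions1c} for $e_1$, whereas the paper integrates $f_{ss}=-a\,m_{ss}\,\alpha$ twice and then identifies the first constant of integration $\xi$ via \eqref{MnThmEq1.5}; and you verify \eqref{PropExampleStatementEq2} by expanding $\eta'$ from $\eta(t)=f(0,t)+a(t)m(0,t)\alpha(t)$, while the paper extracts the same four relations more directly from the isometry conditions $\langle f_s,f_t\rangle=-1$ and $\langle f_t,f_t\rangle=2m$.
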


\begin{proof}
Let $(\Omega,g_m)$ has non-vanishing Gaussian curvature. Consider a quasi-minimal  isometric immersion  $f:(\Omega,g_m)\hookrightarrow R^4_2(c)$ and put $M=f(\Omega)$. Assume that $f$ is  proper biconservative. Then, $\widetilde\nabla$ satisfies \eqref{LCConditions1} because of Lemma \ref{LemLCConditions1}.  The first equation in \eqref{LCConditions1c} gives $\frac{\partial e_3}{\partial s}=0$ which implies
\begin{equation}\label{MnThmEq1}
e_3(s,t)=\alpha(t)
\end{equation}
for a mapping $\alpha:J\to\mathbb E^4_2$. Also the second equation in \eqref{LCConditions1c} and \eqref{MnThmEq1}  give
\begin{equation}\label{MnThmEq1.5}
\alpha'(t)=\frac 1{a(t)}f_s(s,t)+(m_s(s,t)+b(t)) \alpha(t).
\end{equation}
By considering \eqref{MnThmEq1} and \eqref{MnThmEq1.5} one can see that $\alpha$ is a light-like curve  lying on $\mathcal LC$ because $K$ does not vanish.

On the other hand, the first equation in \eqref{LCConditions1a} turns into 
$$f_{ss}(s,t)=-a(t)m_{ss}(s,t)\alpha(t)$$
whose solution is
\begin{equation}\label{MnThmEq2}
f(s,t)=-a(t) m(s,t) \alpha(t) + s\xi(t)+\eta(t)
\end{equation}
for some functions $\xi,\eta: J\to\mathbb E^4_2$. 
By using \eqref{MnThmEq2} and considering \eqref{LemLCCequation} in this equation, we obtain
\begin{equation}\label{MnThmEq3}
\xi=(a'-aL)\alpha+a \alpha'.
\end{equation}
By combining \eqref{MnThmEq2} and \eqref{MnThmEq3} we get \eqref{PropExampleStatementEq3}. 

Now, since $f$ is an isometric immersion, we have $\langle f_s,f_t\rangle=-1$ and $\langle f_t,f_t\rangle=2m$. By a direct computation using $\langle f_s,f_t\rangle=-1$ and \eqref{PropExampleStatementEq3}, we obtain
$$-am_s\langle\alpha,\eta'\rangle+ a' \langle\alpha,\eta'\rangle-a L \langle\alpha,\eta'\rangle+a \langle\alpha',\eta'\rangle=-1$$
which gives the first equation in \eqref{PropExampleStatementEq2a} and \eqref{PropExampleStatementEq2c} because $K=m_{ss}$ does not vanish.
On the other hand, $\langle f_t,f_t\rangle=2m$ and \eqref{PropExampleStatementEq3} imply
$$2 m+2 s \left(-2\frac{ a'}{a}+ L+ a \langle\alpha'',\eta'\rangle\right)+\langle \eta ',\eta '\rangle=2m$$
which gives the second equation in \eqref{PropExampleStatementEq2b} and \eqref{PropExampleStatementEq2d}. Hence $f$ is as given in Proposition \ref{PropExample} which completes the proof.
\end{proof}

By combining Proposition \ref{PropE42FLAT} and Theorem \ref{LCTnonflat}, we obtain the following complete classification of  quasi-minimal, proper biconservative surfaces in $\mathbb E^4_2$.  
\begin{thm}
A  surface $M$ in $\mathbb E^4_2$ is quasi-minimal and proper biconservative if and only if it is congruent to one of the following surfaces:
\begin{enumerate}
\item[(i)] The surface given by  $f(s,t)=(\psi(s,t),\frac{s-t}{\sqrt2},\frac{s+t}{\sqrt2},\psi(s,t)),\ (s,t)\in U$,  where $\psi:U\to\mathbb R$ is a smooth function and $U$ is open in $\mathbb R^2$,

\item[(ii)] The surface given by  $f(s,t)=z(s)t +w(s),$  where $z(s)$ is a light-like curve in the light-cone $\mathcal LC$ and $w$ is a light-like curve satisfying $\langle z',w'\rangle=0$ and $\langle z,w'\rangle=-1$,

\item[(iii)] The surface given in Proposition \ref{PropExample}. 
\end{enumerate}
\end{thm}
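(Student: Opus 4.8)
The plan is to obtain the final classification by simply assembling the three pieces that have already been established, after first disposing of the dichotomy flat versus non-flat. Given a quasi-minimal proper biconservative surface $M$ in $\mathbb E^4_2$, the first step is to look at the Gaussian curvature $K$ of $(\Omega,g)$. Since both defining properties (quasi-minimality and biconservativity) are local, it suffices to work on a neighborhood of an arbitrary point and then invoke a connectedness/analyticity argument, or simply state the result locally as the theorem does. On the open set where $K$ vanishes identically we are exactly in the hypothesis of Proposition \ref{PropE42FLAT}, which gives cases (i) and (ii). On the open set where $K\neq 0$ we are in the hypothesis of Theorem \ref{LCTnonflat}, which forces $M$ to be locally congruent to the surface of Proposition \ref{PropExample}, case (iii).

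The second step is to check that there is no ``mixed'' behavior to worry about at the level of the statement: because the classification is local, around any point $p$ either $K(p)=0$ or $K(p)\neq 0$, and in the latter case $K$ is nonzero on a whole neighborhood, so Theorem \ref{LCTnonflat} applies there verbatim. If $K(p)=0$ but $p$ is not interior to the zero set of $K$, one can still appeal to Proposition \ref{PropE42FLAT} on the (possibly smaller) flat pieces and to Theorem \ref{LCTnonflat} on the non-flat pieces; the theorem only claims local congruence to one of the three model families, so this is all that is needed. The third step — the converse — is immediate: each of the three families (i), (ii), (iii) has already been verified to be quasi-minimal and proper biconservative, namely (i) and (ii) in Proposition \ref{PropE42FLAT} and (iii) in Proposition \ref{PropExample}, the latter also being biharmonic by Theorem \ref{ThmBICimpliesBIH}.

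Concretely, I would write: ``The `if' part follows from Proposition \ref{PropE42FLAT} for (i) and (ii) and from Proposition \ref{PropExample} for (iii). For the `only if' part, let $M$ be quasi-minimal and proper biconservative and let $p\in M$. If the Gaussian curvature vanishes on a neighborhood of $p$, then Proposition \ref{PropE42FLAT} shows that $M$ is locally congruent to (i) or (ii) near $p$. Otherwise there is a neighborhood of $p$ on which $K\neq 0$, and Theorem \ref{LCTnonflat} shows that $M$ is locally congruent near $p$ to the surface described in Proposition \ref{PropExample}, i.e.\ case (iii).'' This is essentially a one-paragraph assembly proof.

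I do not expect any genuine obstacle here, since all the analytic and geometric work is already contained in Proposition \ref{PropE42FLAT}, Lemma \ref{LemLCConditions1}, Proposition \ref{PropExample} and Theorem \ref{LCTnonflat}. The only mild subtlety is the bookkeeping about the locus $\{K=0\}$ versus $\{K\neq 0\}$: one should be slightly careful to phrase the conclusion as a local congruence so that one need not glue the flat and non-flat models across the boundary of the zero set of $K$. If one wanted a global statement one would need $M$ connected plus either real-analyticity of the immersion or an explicit argument that the boundary between the two regimes is empty; but as stated the theorem is local and no such argument is required. Hence the ``hard part'' is really already behind us, and the proof of the final theorem is just the observation that Propositions \ref{PropE42FLAT} and \ref{PropExample} together with Theorem \ref{LCTnonflat} exhaust all cases.
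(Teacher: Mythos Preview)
Your proposal is correct and matches the paper's approach exactly: the paper does not even write out a proof of this theorem, merely stating that it follows ``by combining Proposition \ref{PropE42FLAT} and Theorem \ref{LCTnonflat}'', which is precisely the flat/non-flat dichotomy you describe. Your additional remarks about the locus $\{K=0\}$ versus $\{K\neq 0\}$ and the local nature of the statement are more careful than what the paper itself provides.
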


Finally, by combining \cite[Theorem 5.1]{ChenE42Flat} with Theorem \ref{ThmBICimpliesBIH} and Theorem \ref{LCTnonflat}, we get
\begin{thm}
A  surface $M$ in $\mathbb E^4_2$ is quasi-minimal  and biharmonic if and only if it is congruent to one of the following surfaces:
\begin{enumerate}
\item[(i)] The surface given by  $f(s,t)=(\psi(s,t),\frac{s-t}{\sqrt2},\frac{s+t}{\sqrt2},\psi(s,t)),\ (s,t)\in U$,   for a smooth function $\psi:U\to\mathbb R$ satifying $f_{st}\neq0$ and $f_{sstt}=0$, where $U$ is open in $\mathbb R^2$,

\item[(ii)] The surface given by  $f(s,t)=z(s)t +w(s),$  where $z(s)$ is a light-like curve in the light-cone $\mathcal LC$ and $w$ is a light-like curve satisfying $\langle z',w'\rangle=0$ and $\langle z,w'\rangle=-1$,

\item[(iii)] The surface given in Proposition \ref{PropExample}. 
\end{enumerate}
\end{thm}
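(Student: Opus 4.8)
The strategy is to split on whether the Gaussian curvature $K$ of $(\Omega,g)$ vanishes and to reduce the statement to three results already at hand: the classification of flat quasi-minimal biharmonic surfaces of $\mathbb E^4_2$ in \cite[Theorem~5.1]{ChenE42Flat}, the implication ``proper biconservative $\Rightarrow$ biharmonic'' when $K$ does not vanish (Theorem~\ref{ThmBICimpliesBIH}), and the classification of proper biconservative quasi-minimal surfaces with non-vanishing $K$ (Theorem~\ref{LCTnonflat}). The proof is thus largely an assembly; the one step needing a genuinely new computation is that in the non-flat case a biharmonic quasi-minimal surface is automatically \emph{proper} biconservative.

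For the ``if'' direction I would check the three families in turn. A surface of type~(ii), and a surface of type~(i) subject to $f_{st}\neq0$ and $f_{sstt}=0$, is flat, and these are precisely the surfaces isolated as flat quasi-minimal biharmonic in \cite[Theorem~5.1]{ChenE42Flat}, so a direct verification (already carried out there) shows they satisfy both the tangential part~\eqref{BiconservativeEquationMostGeneral} --- which also follows from Proposition~\ref{PropE42FLAT} --- and the normal part~\eqref{BiharmonicEquationMostGeneral} of the biharmonic system with $\tilde R=0$. For a surface of type~(iii), Proposition~\ref{PropExample} already gives that it is quasi-minimal and proper biconservative, with Gaussian curvature $K=m_{ss}$ not vanishing by construction; hence Theorem~\ref{ThmBICimpliesBIH} upgrades it to a biharmonic immersion.

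For the ``only if'' direction let $M=f(\Omega)$ be quasi-minimal and biharmonic, hence biconservative. Fix a point and pass to a connected neighbourhood. If $K\equiv0$ there, then $M$ is flat and \cite[Theorem~5.1]{ChenE42Flat} forces it to be congruent to a surface of type~(i) with $f_{st}\neq0$, $f_{sstt}=0$, or of type~(ii). Otherwise, restrict to the open set where $K\neq0$; there I claim $f$ is proper. Indeed, if $\nabla^\perp H^f$ vanished on some open subset, then $H^f$ would be parallel there, so $\Delta^\perp H^f=0$, the ambient curvature term in \eqref{BiharmonicEquationMostGeneral} vanishes since $\tilde R=0$, and \eqref{BiharmonicEquationMostGeneral} collapses to $\tr\alpha_f(A^f_{H^f}(\cdot),\cdot)=0$; expanding this in the pseudo-orthonormal frame $\{e_1,e_2;e_3,e_4\}$ of Section~3 gives $h^3_{11}h^3_{22}=0$ and $h^3_{11}h^4_{22}+h^3_{22}h^4_{11}=0$, and comparison with the Gauss equation~\eqref{MinkGaussEquation} then yields $K=0$ there, a contradiction. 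Consequently $\{\nabla^\perp H^f\neq0\}$ is dense in $\{K\neq0\}$, and on this open dense set $f$ is proper biconservative, quasi-minimal, with non-vanishing $K$, so Theorem~\ref{LCTnonflat} identifies it, up to congruence, with the surface of Proposition~\ref{PropExample}, i.e.\ of type~(iii). Combining the two cases completes the classification.

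The main obstacle is this last step in the non-flat case: one must pass from ``$f$ is congruent to a type~(iii) surface on an open dense subset of $\{K\neq0\}$'' to a clean local statement near every point of $\{K\neq0\}$, i.e.\ control the exceptional locus $\{\nabla^\perp H^f=0\}\cap\{K\neq0\}$, which a priori could be nonempty with empty interior. The resolution I would pursue is to observe, via \eqref{LCConditions1c}--\eqref{LemLCCequation}, that on each proper biconservative piece the structure functions $a,b,z$ depend on $t$ alone and are governed by $b+a'/a=L$ with $L=L(t)$; continuity of $H^f$, $\alpha_f$ and the normal connection then lets the congruence furnished by Theorem~\ref{LCTnonflat} extend across the thin exceptional set, so that the three-family list is exhaustive near every point. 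Everything else in the argument is bookkeeping once Theorems~\ref{ThmBICimpliesBIH} and~\ref{LCTnonflat} and \cite[Theorem~5.1]{ChenE42Flat} are granted.
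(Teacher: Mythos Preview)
Your approach is the same as the paper's: split on whether $K$ vanishes and invoke \cite[Theorem~5.1]{ChenE42Flat} in the flat case and Theorems~\ref{ThmBICimpliesBIH} and~\ref{LCTnonflat} in the non-flat case. The paper's proof is literally the one-line ``by combining \cite[Theorem~5.1]{ChenE42Flat} with Theorem~\ref{ThmBICimpliesBIH} and Theorem~\ref{LCTnonflat}'', so you are actually supplying more detail than the paper does.

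In particular, you correctly flag a point the paper glosses over: Theorem~\ref{LCTnonflat} classifies \emph{proper} biconservative quasi-minimal surfaces, whereas biharmonicity only gives biconservativity. Your argument that on $\{K\neq0\}$ the mean curvature vector cannot be parallel on any open subset (else $\Delta^\perp H^f=0$ and \eqref{BiharmonicEquationMostGeneral} together with the Gauss equation force $K=0$) is sound, and the density-plus-continuity extension you sketch is the right way to close the gap; the paper simply does not address this. So your proof is complete where the paper's is elliptical, and otherwise follows the intended route exactly.
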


\section*{Acknowledgements}
This work was obtained during the  ITU-GAP project \emph{ARI2Harmoni} (Project Number: TGA-2017-40722).

\bibliographystyle{amsplain}

\end{document}